\newtheorem{proposition}{Proposition}
\newtheorem{lemma}{Lemma}
\journal{European Journal of Operational Research}
\begin{document}

\begin{frontmatter}



\title{What Are We Clustering For? Establishing Performance Guarantees for Time Series Aggregation in Generation Expansion Planning}

\author[label1]{Luca Santosuosso}
\author[label2]{Bettina Klinz}
\author[label1]{Sonja Wogrin}
            
\affiliation[label1]{
            organization={Institute of Electricity Economics and Energy Innovation, Graz University of Technology},
            addressline={Inffeldgasse 18},
            city={Graz},
            postcode={8010},
            country={Austria}}

\affiliation[label2]{
            organization={Institute for Discrete Mathematics, Graz University of Technology},
            addressline={Steyrergasse 30},
            city={Graz},
            postcode={8010},
            country={Austria}}

\begin{abstract}
Generation expansion planning (GEP) is a prominent example of capacity expansion problems in operations research.
Being generally NP-hard, GEP optimization models can become intractable when nonconvex dynamics, time-coupling constraints, and complex asset interactions are involved.
Time series aggregation (TSA) tackles this by reducing temporal complexity via input data clustering.
However, existing TSA methods either focus solely on preserving the statistical features of the input data,
yielding heuristics without guarantees on the aggregated model's accuracy,
or provide error bounds limited to linear models, neglecting time-coupling constraints and applying only to specific clustering techniques.
Moreover, these bounds typically pertain solely to the GEP objective function and do not extend to other stakeholder-specific metrics, such as decision vector partitions.
To tackle these issues, we demonstrate that an appropriately constructed aggregated model always provides a lower bound on the optimal objective function value of the full-scale GEP model in both mixed-integer linear and mixed-integer quadratic formulations with time-coupling, independent of the clustering technique employed.
Building on this, we propose a performance-guaranteed TSA-based solution algorithm that iteratively refines objective function bounds while generating feasible solutions to the full-scale model at each iteration.
We then discuss a comparison with Benders decomposition and demonstrate how the derived bounds can be extended to error estimates for stakeholder-specific metrics.
Numerical results show the computational advantages of our method over both full-scale optimization and classical Benders decomposition.
\end{abstract}



\begin{keyword}
OR in energy \sep generation expansion planning \sep time series aggregation \sep error bounds \sep integer programming.
\end{keyword}

\end{frontmatter}


\section{Introduction}
\label{sec:Introduction}
Generation expansion planning (GEP) is a foundational problem in power system optimization \cite{conejo2016investment} and ranks among the most thoroughly studied capacity expansion problems in operations research \cite{luss1982operations}.
At its core, GEP typically involves determining the optimal mix, sizing, and timing of investments in power generation units to meet future electricity demand at minimal cost, while satisfying operational, economic, and environmental constraints \cite{koltsaklis2018state}.

Traditional GEP optimization models were developed in the context of vertically integrated, state-owned utilities operating centralized power systems \cite{palmintier2015impact}.
In this setting, the electricity demand evolved predictably, and the power generation was dominated by large, dispatchable units with limited uncertainty. These conditions enabled simplified representations of the system operational dynamics, making linear programming (LP) formulations of the GEP problem both suitable and computationally efficient \cite{turvey1977electricity}.

Since the 1990s, the power sector has experienced a profound transformation, driven by the liberalization of electricity markets, the growing integration of variable renewable energy sources (vRES), and the widespread deployment of flexibility technologies such as energy storage systems, electric vehicles, and responsive demand \cite{sadeghi2017comprehensive}.
These developments have resulted in increasingly complex power system architectures, marked by strong interdependencies among heterogeneous assets, nonlinear and time-varying operational dynamics, as well as a broader spectrum of stakeholder objectives. 
As a result, modern GEP formulations have advanced into increasingly sophisticated mathematical programs \cite{hobbs1995optimization}, encompassing mixed-integer linear programming (MILP) \cite{kang2023stochastic}, mixed-integer quadratic programming (MIQP) \cite{franco2014mixed}, and mixed-integer nonlinear programming (MINLP) models \cite{gbadamosi2021comparative}, together with hybrid optimization frameworks integrating multiple modeling paradigms \cite{irawan2023integrated}.

While it is widely recognized that the GEP problem has grown increasingly complex \cite{dagoumas2019review}, the mere fact that it is formulated as a MI(N)LP model does not, by itself, provide any indication of its computational complexity. In this regard, the theoretical analysis presented in \cite{goderbauer2019synthesis} formally establishes that the GEP problem is, in general, strongly NP-hard, i.e., there is no polynomial time algorithm for solving it exactly unless $\mathrm{P} = \mathrm{NP}$.
Thus, GEP models, typically involving millions or even billions of variables, are not only computationally demanding but also often computationally intractable \cite{li2022mixed}.

This computational challenge has motivated extensive research into approximation methods \cite{constante2025relaxation}, among which two classes have gained particular prominence: decomposition methods and aggregation methods.
Decomposition methods mitigate computational complexity by partitioning the original GEP problem into smaller, more manageable subproblems, which are then solved iteratively and coordinated to seek convergence toward global optimality.
Among these, applications of the Benders decomposition method to GEP have been widely investigated \cite{liu2024generalized}.
In contrast, aggregation methods maintain the full resolution of the GEP optimization model but reduce its dimensionality by aggregating variables and/or constraints. This results in a surrogate (or aggregated) model that approximates the original full-scale model while significantly lowering its computational complexity.

This study focuses on advancing a specific subset of aggregation methods, namely time series aggregation (TSA),
with particular emphasis on establishing performance guarantees in the form of theoretically validated bounds on the maximum error incurred by the aggregated models.
Specifically, we examine TSA applied to both mixed-integer linear and nonlinear GEP models with time-coupling constraints, addressing the challenge of preserving temporal chronology during aggregation,
while also providing a comparison between the proposed TSA-based solution algorithms and Benders decomposition.

\subsection{Literature Review}
\label{subsec:Literature_Review}
Traditional TSA methods reduce the complexity of optimization models by applying clustering techniques to the input time series \cite{lara2018deterministic}.
Based on the statistical features of the input data, each time period is assigned to a cluster, with each cluster corresponding to a representative time period.
The aggregated optimization model is then formulated over this reduced set of representative periods and their corresponding aggregated input values.
When the number of representative periods is significantly smaller than that of the original time series, solving the aggregated model in place of the full-scale model yields significant computational advantages.

Traditional TSA methods have been extensively employed in GEP \cite{bylling2020impact}, 
and are generally distinguished by the clustering techniques used to identify representative periods.
Common techniques include k-means \cite{jia2020privacy}, k-medoids \cite{sarajpoor2023time}, hierarchical clustering \cite{liu2017hierarchical}, and Gaussian mixture models (GMM) \cite{zhang2020novel}.
These techniques have been thoroughly evaluated in the literature, with a general consensus that their relative performance is highly dependent on the specific characteristics of the problem at hand \cite{kotzur2018impact}.
In response, recent research has increasingly focused on the development of TSA methods that refine clustering techniques to better reflect the characteristics of the input time series, such as load profiles \cite{mets2015two} and vRES power generation \cite{jin2021wind},
as well as to more accurately represent extreme events \cite{blanford2018simulating} and temporal correlations across multiple time series \cite{sarajpoor2021shape}.
Although these tailored approaches can improve the fidelity of aggregated models in specific applications, their performance remains inherently problem-dependent \cite{teichgraeber2022time}.

Moreover, the direct application of conventional clustering techniques for TSA in GEP is often infeasible, particularly in the presence of storage constraints, as the temporal chronology of the model must be preserved \cite{garcia2020representative}.
This issue can be addressed either by modifying the clustering technique to retain temporal chronology \cite{pineda2018chronological}
or by reconstructing it within the aggregated model \cite{tejada2017representation}.
While enforcing chronology during clustering ensures consistency with the full-scale model, it limits algorithmic flexibility and may yield less representative clusters \cite{poncelet2016impact}.
Conversely, allowing unrestricted clustering provides greater flexibility but requires careful design of the aggregated model to restore temporal coherence with its full-scale counterpart \cite{tejada2018enhanced}.

Despite their widespread use, traditional TSA methods rest on the assumption that accurately capturing the statistical features of the input time series yields an accurate aggregated model, a premise that often fails \cite{hoffmann2021typical}.
This limitation has fostered a distinction between traditional \textit{a priori} TSA methods,
which aim to minimize the \textit{input error} by focusing solely on accurately representing the input space through clustering,
and the emerging \textit{a posteriori} TSA methods,
which extend clustering with optimization routines and full-scale model analysis to directly reduce the \textit{output error},
i.e., the discrepancy between aggregated and full-scale model outputs \cite{hoffmann2020review}.

Given that modelers are typically more concerned with accurately representing the model’s output space rather than its input space,
a posteriori TSA has attracted growing interest,
with examples in the literature proposing both data-driven \cite{sun2019data} and optimization-based \cite{poncelet2016selecting} a posteriori methods for GEP.
Notably, \cite{wogrin2023time} demonstrates that, if the modeler were to know the active constraints of an LP problem in advance,
an a posteriori method could be employed to construct an aggregated model that exactly replicates the solution of its full-scale counterpart,
while achieving a dimensionality reduction of three orders of magnitude.
This result highlights the potential of a posteriori TSA to deliver aggregated models with strong \textit{performance guarantees}.

However, the assumption of prior knowledge of the active constraints is generally impractical in real-world problems.
Alternatively, previous research has focused on deriving bounds on the objective function error introduced by aggregated models relative to the full-scale model \cite{huberman1983error}.
Algorithms for computing such bounds have been proposed for GEP problems without \cite{yokoyama2021effect} and with \cite{tso2020hierarchical} storage constraints.
Nonetheless, the validity of these bounds is not formally demonstrated, rendering these algorithms fundamentally heuristic.

The formal derivation of performance guarantees for TSA methods,
particularly in the form of theoretically validated bounds on the maximum error incurred by the aggregated models,
has a long-standing tradition in operations research \cite{alexander2021general}.
Classical studies have primarily focused on LP models,
addressing both row aggregation \cite{zipkin1980bounds} and, in a broader sense, variable aggregation \cite{zipkin1980bounds2}.
More recently, research has shifted toward MILP formulations of the GEP problem.
Building on the Jensen's inequality, \cite{munoz2016new} demonstrated that, in the absence of storage constraints, the optimal objective function value of an appropriately constructed aggregated model provides a valid lower bound on that of the full-scale model.
This theoretical result was extended in \cite{li2022representative} to GEP models with storage constraints, albeit under the assumption that a specific clustering technique, namely, k-means, is used.

Despite these advances, several research gaps remain.
These include deriving clustering-independent objective function bounds via TSA,
the extension of such bounds to MINLP formulations of the GEP with storage constraints, particularly within the increasingly popular MIQP formulations,
the systematic recovery of feasible solutions to the full-scale model from those obtained via the aggregated model,
and the translation of the objective bounds into error bounds on the optimal solution itself.
These challenges continue to drive efforts toward rigorous performance guarantees for TSA methods \cite{fragoso2025solving}.

\subsection{Research Questions and Contributions}
\label{subsec:Motivation_and_Contributions}
The literature review reveals the following open research questions:
\begin{itemize}
    \item \textit{How can performance guarantees be established for aggregated models derived via TSA?}
    Several studies propose TSA-based algorithms to derive bounds on the objective function value error incurred by aggregated models in GEP \cite{yokoyama2021effect, tso2020hierarchical},
    accompanied by formal validations of these bounds \cite{munoz2016new, li2022representative, teichgraeber2019clustering}.
    Nonetheless, these contributions exhibit notable limitations.
    Specifically, the bounds presented in \cite{yokoyama2021effect, tso2020hierarchical} lack formal theoretical validation;
    the theoretical analysis in \cite{teichgraeber2019clustering} is restricted to LP models;
    \cite{munoz2016new} neglects storage constraints, which introduce considerable complexity into the TSA process due to the necessity of preserving temporal chronology; 
    and \cite{li2022representative} provides performance guarantees only when the clustering technique employed for TSA is k-means.
    Notably, none of these studies address MINLP formulations of the GEP problem.
    A TSA method yielding aggregated models with rigorously validated objective function error bounds, independent of the clustering technique employed and preserving the temporal chronology imposed by storage constraints in MI(N)LP GEP models, is currently lacking.
    \item \textit{What are we clustering for?}
    As previously analyzed, existing research on TSA has focused on deriving performance bounds for aggregated models.
    However, such objective function error bounds do not readily extend to the individual cost terms within the objective function or to the model’s optimal solution.
    This limitation is particularly critical in GEP, where similar models are employed by diverse stakeholders, such as energy utilities \cite{lopez2020integrated}, policy makers \cite{pina2011modeling}, and system planners \cite{bruno2016risk}, each focusing on different modeling aspects of the same GEP problem.
    For instance, a wind power investor may rely on a comprehensive GEP model to capture system-wide dynamics, yet their primary concern lies in the investment and operational performance of the wind units alone.
    Deriving error bounds on the aggregated model’s objective function value, which may be dominated by cost terms from other technologies, offers limited relevance.
    This issue is further compounded when using the TSA-based algorithms in \cite{yokoyama2021effect, tso2020hierarchical, munoz2016new, li2022representative, teichgraeber2019clustering}, which are deemed convergent once the optimality gap, i.e., the difference between the derived upper and lower bounds on the objective function value, falls below a predefined threshold:
    although all solutions within this gap satisfy the global bounds,
    only a subset may retain fidelity in the specific modeling aspects relevant to a given stakeholder.
    To date, no study has examined how the objective function bounds derived via TSA methods can be disaggregated into stakeholder-specific performance metrics.
\end{itemize}

This study seeks to bridge these research gaps.
The key contributions of this paper are as follows:
\begin{itemize}
    \item We provide both MILP and MIQP formulations of the GEP problem with storage constraints, together with the corresponding aggregated models derived via TSA.
    We then establish a theoretical result showing that appropriately constructed aggregated models always yield valid lower bounds on the optimal objective function value of their full-scale MILP and MIQP counterparts.
    Notably, this theoretical result holds irrespective of the clustering technique used for TSA.
    \item Building on this theoretical foundation,
    we propose a performance-guaranteed algorithm to solve the GEP problem by leveraging the aggregated model's solution,
    applicable to both MILP and MIQP formulations. The proposed algorithm provides theoretically validated bounds on the objective function error incurred by the aggregated model relative to its full-scale counterpart,
    while delivering a feasible solution to the full-scale model at every iteration. To position the proposed algorithm within the broader landscape of existing approaches for addressing the computational complexity of GEP,
    we compare it with the popular Benders decomposition approach and show its relative effectiveness through numerical results.
    \item Finally, to address the question of \textit{what are we clustering for?},
    we demonstrate how the derived objective function value bounds can be disaggregated into relevant stakeholder-specific performance metrics,
    including individual cost terms within the objective function of the GEP model and selected partitions of the decision vector.
\end{itemize}

In the following, we focus specifically on GEP problems characterized by high penetration of vRES, reflecting the critical role of deeply decarbonized power systems in the ongoing energy transition.
Nevertheless, the proposed theoretical results and solution algorithms are readily applicable to a broad class of capacity expansion problems in domains such as heavy process industries, communication networks, and water resource systems, which exhibit strong modeling parallels with the GEP models considered here \cite{luss1982operations}.

The remainder of the paper is organized as follows:
Section~\ref{sec:Methodology} presents the problem formulation, the theoretical results, and the solution algorithm.
Section~\ref{sec:numerical_results} reports the numerical results, and Section~\ref{sec:conclusion} concludes the study.

\section{Methodology}
\label{sec:Methodology}
This section outlines the proposed methodology.
The GEP problem is introduced in Subsection~\ref{subsec:problem_statement}.
The full-scale models are presented in Subsection~\ref{subsec:full_scale_models}, followed by their aggregated counterparts in Subsection~\ref{subsec:aggregated_models}. 
Subsection~\ref{subsec:properties_aggregated_models} discusses the properties of the aggregated models.
Subsection~\ref{subsec:performance_guaranteed_TSA} introduces the performance-guaranteed TSA-based solution algorithm,
while Subsection~\ref{subsec:comparison_benders_decomposition} compares it with Benders decomposition.
Finally, Subsection~\ref{subsec:what_are_we_clustering_for} extends the performance guarantees to stakeholder-specific metrics.

In the following, vectors and sets are denoted in bold, e.g., $\boldsymbol{x}$, while matrices are represented by bold, non-italicized uppercase symbols, e.g., $\mathbf{A}$.
The symbol $\circ$ denotes the Hadamard product (or element-wise product) between vectors or matrices.
Moreover, $\mathbf{1}_{l\times m}$, $\mathbf{0}_{l\times m}$, and $\mathbf{I}_{m}$ represent the $l \times m$ matrix of ones, the $l \times m$ zero matrix, and the $m \times m$ identity matrix, respectively.

\subsection{Problem Statement}
\label{subsec:problem_statement}
The goal is to determine the optimal mix and sizing of generation units that minimize both capital investment and operational costs, while satisfying the energy demand.
The investment options include thermal generation, vRES (wind and solar), and storage systems. 
Various GEP formulations exist in the literature \cite{munoz2016new}.
Here, we adopt formulations akin to that of \cite{li2022mixed}.

The GEP problem is defined over a discrete set of time periods $\boldsymbol{T}$, indexed by $t$.
It involves a total of $G$ generators and $N$ energy storage systems.
The energy demand in each period is denoted by $D_t$,
while the vector of vRES capacity factors during period $t$ is denoted by $\boldsymbol{F}_{t} \in \mathbb{R}^G$.
The operational costs associated with the generators are captured by the vector $\boldsymbol{C^\mathrm{p}} \in \mathbb{R}^G$,
while the penalty cost for non-supplied energy demand is $C^\mathrm{ns}$.
The capital investment required for expanding the generation capacity is $\boldsymbol{C}^{\boldsymbol{\mathrm{inv}}} \in \mathbb{R}^{G+N}$.

\subsection{The Full-Scale Optimization Models}
\label{subsec:full_scale_models}
This subsection presents two distinct full-scale models for the GEP problem, formulated as MILP and MIQP optimization problems, respectively.

Let $\boldsymbol{x^\mathrm{p}} \in \mathbb{R}^G$ and $\boldsymbol{x^\mathrm{s}} \in \mathbb{R}^N$ denote the installed capacities of the generators and the energy storage systems, respectively.
The power output of the generators at time $t$ is denoted by $\boldsymbol{p}_{t} \in \mathbb{R}^G$,
and the non-supplied energy demand in the same period is denoted by $d^\mathrm{ns}_t$.
Moreover, the state of charge of the energy storage systems at time $t$ is denoted by $\boldsymbol{s}_{t} \in \mathbb{R}^N$.
The initial state of charge of the storage systems is denoted by $\boldsymbol{S^0} \in \mathbb{R}^N$,
while their charging and discharging efficiencies are denoted by $\boldsymbol{\eta^\mathrm{c}} \in \mathbb{R}^N$ and $\boldsymbol{\eta^\mathrm{d}} \in \mathbb{R}^N$, respectively.
The power charged to and discharged from the energy storage systems are $\boldsymbol{p}^{\boldsymbol{\mathrm{c}}}_t \in \mathbb{R}^N$ and $\boldsymbol{p}^{\boldsymbol{\mathrm{d}}}_t \in \mathbb{R}^N$, respectively.
The maximum and minimum charging power limits are denoted by $\overline{\boldsymbol{P}}^{\boldsymbol{\mathrm{c}}} \in \mathbb{R}^N$ and $\underline{\boldsymbol{P}}^{\boldsymbol{\mathrm{c}}} \in \mathbb{R}^N$, respectively.
Similarly, the maximum and minimum discharging power limits are denoted by $\overline{\boldsymbol{P}}^{\boldsymbol{\mathrm{d}}} \in \mathbb{R}^N$ and $\underline{\boldsymbol{P}}^{\boldsymbol{\mathrm{d}}} \in \mathbb{R}^N$, respectively.
The binary variables in $\boldsymbol{b} \in \mathbb{R}^{G+N}$ are used to enforce that the installed capacities are either 0
or within their upper $\left(\overline{\boldsymbol{X}} \in \mathbb{R}^{G+N}\right)$ and lower $\left(\underline{\boldsymbol{X}} \in \mathbb{R}^{G+N}\right)$ limits.

To streamline notation, we define the vectors
$\boldsymbol{x} \coloneqq \left[ {\boldsymbol{x}^{\boldsymbol{\mathrm{p}}}}^\top, {\boldsymbol{x}^{\boldsymbol{\mathrm{s}}}}^\top \right]^\top$,
$\boldsymbol{p}^{\boldsymbol{\mathrm{s}}}_{t} \coloneqq \left[ {\boldsymbol{p}^{\boldsymbol{\mathrm{c}}}_{t}}^\top, {\boldsymbol{p}^{\boldsymbol{\mathrm{d}}}_{t}}^\top \right]^\top$,
$\overline{\boldsymbol{P}}^{\boldsymbol{\mathrm{s}}} \coloneqq \left[{\overline{\boldsymbol{P}}^{\boldsymbol{\mathrm{c}}}}^\top, {\overline{\boldsymbol{P}}^{\boldsymbol{\mathrm{d}}}}^\top\right]^\top$,
and
$\underline{\boldsymbol{P}}^{\boldsymbol{\mathrm{s}}} \coloneqq \left[{\underline{\boldsymbol{P}}^{\boldsymbol{\mathrm{c}}}}^\top, {\underline{\boldsymbol{P}}^{\boldsymbol{\mathrm{d}}}}^\top\right]^\top$.

The set of decision variables in the full-scale GEP optimization model is denoted by $\boldsymbol{z}$, and is defined as $\boldsymbol{z} \coloneqq \left\{\boldsymbol{x}, \boldsymbol{b}, \boldsymbol{p}_{t}, \boldsymbol{s}_{t}, \boldsymbol{p}^{\boldsymbol{\mathrm{s}}}_{t}, d^\mathrm{ns}_t \; | \; t \in \boldsymbol{T} \right\}$,
with all variables in $\boldsymbol{z}$ assumed to take non-negative values.

Then, the \textbf{full-scale MILP model} for the GEP problem is defined over a discrete set of time periods $t \in \boldsymbol{T}$, with sampling time $\Delta$, as follows:
\begin{subequations}\label{MILP_full_scale_investment_model}
\begin{align}
\min_{\boldsymbol{z}} \quad & J(\boldsymbol{z}) \coloneqq \boldsymbol{C^\mathrm{inv}}^\top \boldsymbol{x} + \sum_{t \in \boldsymbol{T}} \left( \boldsymbol{C^\mathrm{p}}^\top \boldsymbol{p}_{t} \, \Delta + C^\mathrm{ns} \, d^\mathrm{ns}_t\right) \label{MILP_full_scale_investment_model_obj}\\
\textrm{s.t.} \quad & \mathbf{1}_G^\top \, \boldsymbol{p}_{t} \, \Delta + \mathbf{1}_N^\top \left(\boldsymbol{p}^{\boldsymbol{\mathrm{d}}}_{t} - \boldsymbol{p}^{\boldsymbol{\mathrm{c}}}_{t}\right) \Delta + d^\mathrm{ns}_t = D_t, \quad \forall t, \label{MILP_full_scale_investment_model_power_balance}\\
& \boldsymbol{s}_{t+1} = \boldsymbol{s}_{t} + \left( \boldsymbol{\eta^\mathrm{c}} \circ \boldsymbol{p}^{\boldsymbol{\mathrm{c}}}_{t} - \boldsymbol{\eta^\mathrm{d}} \circ \boldsymbol{p}^{\boldsymbol{\mathrm{d}}}_{t} \right) \Delta, \quad \forall t \in \left\{0,\dots,\left|\boldsymbol{T}\right| - 1\right\}, \label{MILP_full_scale_investment_model_storage_state}\\
& \boldsymbol{s}_{0} = \boldsymbol{S^0}, \label{MILP_full_scale_investment_model_storage_init}\\
& \boldsymbol{\underline{P}}^{\boldsymbol{\mathrm{s}}} \leq \boldsymbol{p}^{\boldsymbol{\mathrm{s}}}_{t} \leq \boldsymbol{\overline{P}}^{\boldsymbol{\mathrm{s}}}, \quad \forall t, \label{MILP_full_scale_investment_model_storage_power_limits}\\
& 0 \leq \boldsymbol{s}_{t} \leq \boldsymbol{x}^{\boldsymbol{\mathrm{s}}} \, \Delta, \quad \forall t, \label{MILP_full_scale_investment_model_sto_limits}\\
& 0 \leq \boldsymbol{p}_{t} \leq \boldsymbol{F}_{t} \circ \boldsymbol{x}^{\boldsymbol{\mathrm{p}}}, \quad \forall t, \label{MILP_full_scale_investment_model_gen_limits}\\
& \boldsymbol{b} \circ \underline{\boldsymbol{X}} \leq \boldsymbol{x} \leq \boldsymbol{b} \circ \overline{\boldsymbol{X}}, \label{MILP_full_scale_investment_model_inv_limits}\\
& \boldsymbol{b} \in \{0,1\}^{G + N}. \label{MILP_full_scale_investment_model_binary}
\end{align}
\end{subequations}
In \eqref{MILP_full_scale_investment_model}, the objective function $J(\boldsymbol{z})$, as defined in \eqref{MILP_full_scale_investment_model_obj}, captures the investment and operational costs incurred over the GEP horizon.
The energy balance is enforced by \eqref{MILP_full_scale_investment_model_power_balance}.
The dynamics of the storage systems are governed by \eqref{MILP_full_scale_investment_model_storage_state} and \eqref{MILP_full_scale_investment_model_storage_init},
while the charging and discharging power limits are enforced by \eqref{MILP_full_scale_investment_model_storage_power_limits}.
The capacity limits for generation and storage are imposed by \eqref{MILP_full_scale_investment_model_gen_limits} and \eqref{MILP_full_scale_investment_model_sto_limits}, respectively.
Finally, the installed capacities are constrained by \eqref{MILP_full_scale_investment_model_inv_limits}.

To accommodate a broader class of GEP formulations, we extend the MILP model in \eqref{MILP_full_scale_investment_model} by adding to the objective function a quadratic regularization term that penalizes deviations of selected variables from predefined reference values. This yields the following \textbf{full-scale MIQP model}:
\begin{subequations}\label{MIQP_full_scale_investment_model}
\begin{align}
\min_{\boldsymbol{z}} \quad & \bar{J}(\boldsymbol{z}) \coloneqq J\left(\boldsymbol{z}\right) + \sum_{t \in \boldsymbol{T}} \left\|\mathbf{A} \boldsymbol{z}^{\boldsymbol{\mathrm{op}}}_t - \boldsymbol{Z}^{\boldsymbol{\mathrm{ref}}}_t\right\|_2^2 \label{MIQP_full_scale_investment_model_obj}\\
\textrm{s.t.} \quad & \eqref{MILP_full_scale_investment_model_power_balance}-\eqref{MILP_full_scale_investment_model_binary}. \nonumber
\end{align}
\end{subequations}
In \eqref{MIQP_full_scale_investment_model_obj},
$\| \cdot \|_2$ denotes the Euclidean (or $\ell^2$) norm,
$\mathbf{A} \in \mathbb{R}^{R \times \left(G + 3 N + 1\right)}$,
$\boldsymbol{z}^{\boldsymbol{\mathrm{op}}}_t \coloneqq \left[ {\boldsymbol{p}}^\top_{t}, d^\mathrm{ns}_t, \boldsymbol{s}^\top_{t}, {\boldsymbol{p}^{\boldsymbol{\mathrm{s}}}_t}^\top \right]^\top \in \mathbb{R}^{G + 3 N + 1}$ collects the operational variables in \eqref{MILP_full_scale_investment_model}.
Moreover, $\boldsymbol{Z}^{\boldsymbol{\mathrm{ref}}}_t \in \mathbb{R}^{R}$ collects the input reference values at time $t$.

For example, by taking 
$\mathbf{A} = 
    \begin{bmatrix}
    \mathbf{0}_{N \times G + 1},
    \mathbf{I}_{N},
    \mathbf{0}_{N \times 2 N}
    \end{bmatrix}$,
we penalize deviations of the storage states $\boldsymbol{s}_{t}$ from their reference values $\boldsymbol{Z}^{\boldsymbol{\mathrm{ref}}}_t$:
\begin{equation}
\label{storage_quadratic_penalty}
    \sum_{t \in \boldsymbol{T}} \left\|\mathbf{A} \boldsymbol{z}^{\boldsymbol{\mathrm{op}}}_t - \boldsymbol{Z}^{\boldsymbol{\mathrm{ref}}}_t\right\|_2^2 = \sum_{t \in \boldsymbol{T}} \left\|\boldsymbol{s}_{t} - \boldsymbol{Z}^{\boldsymbol{\mathrm{ref}}}_t\right\|_2^2.
\end{equation}
Quadratic regularization terms of the form \eqref{storage_quadratic_penalty} are widely used in storage operations \cite{santosuosso2024stochastic}.
Whereas the MILP model \eqref{MILP_full_scale_investment_model} may induce full utilization of the storage flexibility, this regularization term promotes a more conservative operation, preserving flexibility capacity for future unforeseen events.

\subsection{The Aggregated Optimization Models}
\label{subsec:aggregated_models}
This subsection presents the aggregated GEP optimization models.

Mixed-integer formulations of the GEP problem are strongly NP-hard \cite{goderbauer2019synthesis}.
As the number of binary variables and time periods increases, the full-scale models \eqref{MILP_full_scale_investment_model} and \eqref{MIQP_full_scale_investment_model} may become computationally expensive or even intractable.
To mitigate this, TSA can be used to derive aggregated models defined over a reduced set of representative time periods (or clusters), denoted by $\boldsymbol{K}$ and indexed by $k$.
When $|\boldsymbol{K}| \ll |\boldsymbol{T}|$, solving the aggregated model in place of its full-scale counterpart yields significant computational savings.

Let $\boldsymbol{T}_k$ denote the set of time periods in $\boldsymbol{T}$ assigned to the $k$-th cluster, and define $T_k \coloneqq \left|\boldsymbol{T}_k\right|$ as the number of time periods in the $k$-th cluster.

\textbf{Assumption I}: The aggregated models are constructed by clustering the original set of time periods $\boldsymbol{T}$ while preserving temporal chronology, i.e., for any $t' \in \boldsymbol{T}_{k'}$ and $t'' \in \boldsymbol{T}_{k''}$ with $k' < k''$, it holds that $t' < t''$.

To retain consistency with the full-scale models, the investment and storage state variables in the following aggregated models are defined as 

\noindent
\begin{minipage}{0.3\textwidth}
\begin{equation}\label{agg_vars:inv}
    \boldsymbol{\hat{x}} \coloneqq \boldsymbol{x},
\end{equation}
\end{minipage}%
\hfill
\begin{minipage}{0.3\textwidth}
\begin{equation}\label{agg_vars:inv_bin}
    \boldsymbol{\hat{b}} \coloneqq \boldsymbol{b},
\end{equation}
\end{minipage}%
\hfill
\begin{minipage}{0.4\textwidth}
\begin{equation}\label{agg_vars:sto}
    \boldsymbol{\hat{s}}_{k} \coloneqq \boldsymbol{s}_{\mathrm{min}(\boldsymbol{T}_k)}, \; \forall k,
\end{equation}
\end{minipage}
where $\mathrm{min}\left(\boldsymbol{T}_k\right)$ denotes the first element in $\boldsymbol{T}_k$.

The aggregated variables corresponding to the remaining full-scale variables in \eqref{MILP_full_scale_investment_model} and \eqref{MIQP_full_scale_investment_model} are defined as

\noindent
\begin{minipage}{0.5\textwidth}
\begin{equation}\label{agg_vars:nsdemand}
    \hat{d}^{\mathrm{ns}}_k \coloneqq \frac{1}{T_k} \sum_{t \in \boldsymbol{T}_k} d_t^{\mathrm{ns}}, \; \forall k,
\end{equation}
\end{minipage}%
\hfill
\begin{minipage}{0.5\textwidth}
\begin{equation}\label{agg_vars:powergen}
    \boldsymbol{\hat{p}}_{k} \coloneqq \frac{1}{T_k} \sum_{t \in \boldsymbol{T}_k} \boldsymbol{p}_{t}, \; \forall k,
\end{equation}
\end{minipage}

\noindent
\begin{minipage}{0.5\textwidth}
\begin{equation}\label{agg_vars:stochr}
    \boldsymbol{\hat{p}}^{\boldsymbol{\mathrm{c}}}_{k} \coloneqq \frac{1}{T_k} \sum_{t \in \boldsymbol{T}_k} \boldsymbol{p}^{\boldsymbol{\mathrm{c}}}_{t}, \; \forall k,
\end{equation}
\end{minipage}%
\hfill
\begin{minipage}{0.5\textwidth}
\begin{equation}\label{agg_vars:stodischr}
    \boldsymbol{\hat{p}}^{\boldsymbol{\mathrm{d}}}_{k} \coloneqq \frac{1}{T_k} \sum_{t \in \boldsymbol{T}_k} \boldsymbol{p}^{\boldsymbol{\mathrm{d}}}_{t}, \; \forall k.
\end{equation}
\end{minipage}

\noindent Moreover, we denote by $\boldsymbol{\hat{p}}^{\boldsymbol{\mathrm{s}}}_{k} \coloneqq \left[ {\boldsymbol{\hat{p}}^{\boldsymbol{\mathrm{c}}}_{k}}^\top, {\boldsymbol{\hat{p}}^{\boldsymbol{\mathrm{d}}}_{k}}^\top \right]^\top$ the aggregated vector of storage charging and discharging power,
and we group the decision variables in \eqref{agg_vars:inv}--\eqref{agg_vars:stodischr} into the set $\boldsymbol{\hat{z}}$, defined as $\boldsymbol{\hat{z}} \coloneqq \left\{\boldsymbol{\hat{x}}, \boldsymbol{\hat{b}}, \boldsymbol{\hat{p}}_{k}, \boldsymbol{\hat{s}}_{k}, \boldsymbol{\hat{p}}^{\boldsymbol{\mathrm{s}}}_{k}, \hat{d}^\mathrm{ns}_k \; | \; k \in \boldsymbol{K} \right\}$.

Then, the \textbf{aggregated MILP model}, corresponding to the full-scale MILP model \eqref{MILP_full_scale_investment_model}, is defined over the set of representative periods $\boldsymbol{K}$ as
\begin{subequations}\label{MILP_aggregated_investment_model}
\begin{align}
\min_{\boldsymbol{\hat{z}}} \quad & \hat{J}(\boldsymbol{\hat{z}}) \coloneqq {\boldsymbol{C}^{\boldsymbol{\mathrm{inv}}}}^\top \boldsymbol{\hat{x}} + \sum_{k \in \boldsymbol{K}} T_k \left( {\boldsymbol{C}^{\boldsymbol{\mathrm{p}}}}^\top \boldsymbol{\hat{p}}_{k} \, \Delta + C^\mathrm{ns} \, \hat{d}^\mathrm{ns}_k\right) \label{MILP_aggregated_investment_model_obj}\\
\textrm{s.t.} \quad & \mathbf{1}_G^\top \, \boldsymbol{\hat{p}}_{k} \, \Delta + \mathbf{1}_N^\top \left( \boldsymbol{\hat{p}}^{\boldsymbol{\mathrm{d}}}_{k} - \boldsymbol{\hat{p}}^{\boldsymbol{\mathrm{c}}}_{k} \right) \Delta + \hat{d}^\mathrm{ns}_k = \frac{1}{T_k} \sum_{t \in \boldsymbol{T}_k} D_t, \quad \forall k, \label{MILP_aggregated_investment_model_power_balance}\\
&\boldsymbol{\hat{s}}_{k+1} = \boldsymbol{\hat{s}}_{k} + \left( {\boldsymbol{\eta^\mathrm{c}}} \circ \boldsymbol{\hat{p}}^{\boldsymbol{\mathrm{c}}}_{k} - {\boldsymbol{\eta^\mathrm{d}}} \circ \boldsymbol{\hat{p}}^{\boldsymbol{\mathrm{d}}}_{k} \right) T_k \, \Delta, \;\, \forall k \in \left\{0,\dots,\left|\boldsymbol{K}\right|-1\right\}, \label{MILP_aggregated_investment_model_storage_state}\\
& \boldsymbol{\hat{s}}_{0} = \boldsymbol{S^0}, \label{MILP_aggregated_investment_model_storage_init}\\
& \boldsymbol{\underline{P}}^{\boldsymbol{\mathrm{s}}} \leq \boldsymbol{\hat{p}}^{\boldsymbol{\mathrm{s}}}_{k} \leq \boldsymbol{\overline{P}}^{\boldsymbol{\mathrm{s}}}, \quad \forall k, \label{MILP_aggregated_investment_model_storage_power_limits}\\
& 0 \leq \boldsymbol{\hat{s}}_{k} \leq \boldsymbol{\hat{x}}^{\boldsymbol{\mathrm{s}}} \, \Delta, \quad \forall k, \label{MILP_aggregated_investment_model_sto_limits}\\
& 0 \leq \boldsymbol{\hat{p}}_{k} \leq \boldsymbol{\hat{x}}^{\boldsymbol{\mathrm{p}}} \circ \left(\frac{1}{T_k} \sum_{t \in \boldsymbol{T}_k} \boldsymbol{F}_{t}\right), \quad \forall k, \label{MILP_aggregated_investment_model_gen_limits}\\
& \boldsymbol{\hat{b}} \circ \underline{\boldsymbol{X}} \leq \boldsymbol{\hat{x}} \leq \boldsymbol{\hat{b}} \circ \overline{\boldsymbol{X}}, \label{MILP_aggregated_investment_model_inv_limits}\\
& \boldsymbol{\hat{b}} \in \{0,1\}^{G + N}. \label{MILP_aggregated_investment_model_binary}
\end{align}
\end{subequations}
In \eqref{MILP_aggregated_investment_model}, the objective function \eqref{MILP_aggregated_investment_model_obj} and the constraints \eqref{MILP_aggregated_investment_model_power_balance}--\eqref{MILP_aggregated_investment_model_binary} represent the aggregated counterparts of \eqref{MILP_full_scale_investment_model_obj} and \eqref{MILP_full_scale_investment_model_power_balance}--\eqref{MILP_full_scale_investment_model_binary}, respectively.

Similarly, the \textbf{aggregated MIQP model}, corresponding to the full-scale MIQP model \eqref{MIQP_full_scale_investment_model}, is defined over the set of representative periods $\boldsymbol{K}$ as
\begin{subequations}\label{MIQP_aggregated_investment_model}
\begin{align}
\min_{\boldsymbol{\hat{z}}} \quad & \hat{\bar{J}}(\boldsymbol{\hat{z}}) \coloneqq \hat{J}(\boldsymbol{\hat{z}}) + \sum_{k \in \boldsymbol{K}} T_k \left\|\mathbf{A} \boldsymbol{\hat{z}}^{\boldsymbol{\mathrm{op}}}_k - \frac{1}{T_k} \sum_{t \in \boldsymbol{T}_k} \boldsymbol{Z}^{\boldsymbol{\mathrm{ref}}}_t \right\|_2^2 \label{MIQP_aggregated_investment_model_obj}\\
\textrm{s.t.} \quad & \eqref{MILP_aggregated_investment_model_power_balance}-\eqref{MILP_aggregated_investment_model_binary}. \nonumber
\end{align}
\end{subequations}
In \eqref{MIQP_aggregated_investment_model}, the objective function \eqref{MIQP_aggregated_investment_model_obj} is the aggregated counterpart of \eqref{MIQP_full_scale_investment_model_obj},
and $\boldsymbol{\hat{z}}^{\boldsymbol{\mathrm{op}}}_k$ is defined as $\boldsymbol{\hat{z}}^{\boldsymbol{\mathrm{op}}}_k \coloneqq \left[ {\boldsymbol{\hat{p}}}^\top_{k}, \hat{d}^\mathrm{ns}_k, \boldsymbol{\hat{s}}^\top_{k}, {\boldsymbol{\hat{p}}^{\boldsymbol{\mathrm{s}}}_{k}}^\top \right]^\top \in \mathbb{R}^{G + 3 N + 1}$.

\subsection{Properties of the Aggregated Optimization Models}
\label{subsec:properties_aggregated_models}
This subsection analyzes the theoretical properties of the aggregated models in Subsection~\ref{subsec:aggregated_models} relative to their full-scale counterparts in Subsection~\ref{subsec:full_scale_models}.

\begin{lemma}\label{lemma:feasible_aggregated_sol}
Let $\boldsymbol{z}$ be a feasible solution to the full-scale MILP model \eqref{MILP_full_scale_investment_model},
or equivalently, to the full-scale MIQP model \eqref{MIQP_full_scale_investment_model}.
Let $\boldsymbol{\hat{z}}$ be derived from $\boldsymbol{z}$ according to \eqref{agg_vars:inv}--\eqref{agg_vars:stodischr}.
Then, under Assumption~I, $\boldsymbol{\hat{z}}$ is a feasible solution to both the aggregated models \eqref{MILP_aggregated_investment_model} and \eqref{MIQP_aggregated_investment_model}.
\end{lemma}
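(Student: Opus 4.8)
The plan is to establish feasibility of $\boldsymbol{\hat z}$ by checking the constraints of the aggregated MILP \eqref{MILP_aggregated_investment_model} one at a time, exploiting two elementary facts: (i) averaging over a finite index set a family of points that each satisfy a given linear equality (resp.\ inequality) yields a point satisfying the same equality (resp.\ inequality) with averaged right-hand side; and (ii) a uniform average of points lying in a box again lies in that box. Two preliminary reductions simplify the task. Since the full-scale MIQP \eqref{MIQP_full_scale_investment_model} and the full-scale MILP \eqref{MILP_full_scale_investment_model} share the same feasible set, it suffices to start from $\boldsymbol{z}$ feasible for \eqref{MILP_full_scale_investment_model}; and since the aggregated MIQP \eqref{MIQP_aggregated_investment_model} is constrained exactly by \eqref{MILP_aggregated_investment_model_power_balance}--\eqref{MILP_aggregated_investment_model_binary}, establishing feasibility for \eqref{MILP_aggregated_investment_model} immediately gives feasibility for \eqref{MIQP_aggregated_investment_model} as well.

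I would first dispatch the constraints that do not involve the cluster averaging. By \eqref{agg_vars:inv}--\eqref{agg_vars:inv_bin} we have $\boldsymbol{\hat x}=\boldsymbol{x}$ and $\boldsymbol{\hat b}=\boldsymbol{b}$, so the investment bounds \eqref{MILP_aggregated_investment_model_inv_limits} and the integrality constraint \eqref{MILP_aggregated_investment_model_binary} are literally \eqref{MILP_full_scale_investment_model_inv_limits} and \eqref{MILP_full_scale_investment_model_binary}, hence satisfied. Next, the per-period constraints: averaging the power-balance equality \eqref{MILP_full_scale_investment_model_power_balance} over $t\in\boldsymbol{T}_k$ and using the defining averages \eqref{agg_vars:nsdemand}--\eqref{agg_vars:stodischr} for $\hat d^{\mathrm{ns}}_k,\boldsymbol{\hat p}_k,\boldsymbol{\hat p}^{\boldsymbol{\mathrm c}}_k,\boldsymbol{\hat p}^{\boldsymbol{\mathrm d}}_k$ reproduces \eqref{MILP_aggregated_investment_model_power_balance} exactly, with right-hand side $\tfrac1{T_k}\sum_{t\in\boldsymbol{T}_k}D_t$. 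For the box-type constraints \eqref{MILP_full_scale_investment_model_storage_power_limits} and \eqref{MILP_full_scale_investment_model_gen_limits}, fact (ii) applied cluster-wise gives $\underline{\boldsymbol{P}}^{\boldsymbol{\mathrm s}}\le\boldsymbol{\hat p}^{\boldsymbol{\mathrm s}}_k\le\overline{\boldsymbol{P}}^{\boldsymbol{\mathrm s}}$ and, since averaging both sides of $\boldsymbol{p}_t\le\boldsymbol{F}_t\circ\boldsymbol{x}^{\boldsymbol{\mathrm p}}$ preserves the inequality and $\boldsymbol{\hat x}^{\boldsymbol{\mathrm p}}=\boldsymbol{x}^{\boldsymbol{\mathrm p}}$, also $\boldsymbol{0}\le\boldsymbol{\hat p}_k\le\boldsymbol{\hat x}^{\boldsymbol{\mathrm p}}\circ\bigl(\tfrac1{T_k}\sum_{t\in\boldsymbol{T}_k}\boldsymbol{F}_t\bigr)$, i.e., \eqref{MILP_aggregated_investment_model_storage_power_limits}--\eqref{MILP_aggregated_investment_model_gen_limits}. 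Finally, since \eqref{agg_vars:sto} picks $\boldsymbol{\hat s}_k=\boldsymbol{s}_{\min(\boldsymbol{T}_k)}$, a particular storage state of the full-scale solution, \eqref{MILP_full_scale_investment_model_sto_limits} yields $\boldsymbol{0}\le\boldsymbol{\hat s}_k\le\boldsymbol{x}^{\boldsymbol{\mathrm s}}\Delta=\boldsymbol{\hat x}^{\boldsymbol{\mathrm s}}\Delta$, which is \eqref{MILP_aggregated_investment_model_sto_limits}, and for the cluster containing period $0$ we get $\boldsymbol{\hat s}_0=\boldsymbol{s}_0=\boldsymbol{S^0}$, which is \eqref{MILP_aggregated_investment_model_storage_init}.

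The crux --- and the only point where Assumption~I is genuinely used --- is the aggregated storage recursion \eqref{MILP_aggregated_investment_model_storage_state}. The argument I have in mind is that chronology-preserving clustering forces each $\boldsymbol{T}_k$ to be a block of consecutive periods with $\min(\boldsymbol{T}_{k+1})=\max(\boldsymbol{T}_k)+1$, so that $\{\,\min(\boldsymbol{T}_k),\dots,\min(\boldsymbol{T}_{k+1})-1\,\}=\boldsymbol{T}_k$. Summing the full-scale recursion \eqref{MILP_full_scale_investment_model_storage_state} over $t\in\boldsymbol{T}_k$ then telescopes the left-hand side to $\boldsymbol{s}_{\min(\boldsymbol{T}_{k+1})}-\boldsymbol{s}_{\min(\boldsymbol{T}_k)}=\boldsymbol{\hat s}_{k+1}-\boldsymbol{\hat s}_k$, while the right-hand side is $\sum_{t\in\boldsymbol{T}_k}\bigl(\boldsymbol{\eta^\mathrm{c}}\circ\boldsymbol{p}^{\boldsymbol{\mathrm c}}_t-\boldsymbol{\eta^\mathrm{d}}\circ\boldsymbol{p}^{\boldsymbol{\mathrm d}}_t\bigr)\Delta = \bigl(\boldsymbol{\eta^\mathrm{c}}\circ\sum_{t\in\boldsymbol{T}_k}\boldsymbol{p}^{\boldsymbol{\mathrm c}}_t-\boldsymbol{\eta^\mathrm{d}}\circ\sum_{t\in\boldsymbol{T}_k}\boldsymbol{p}^{\boldsymbol{\mathrm d}}_t\bigr)\Delta = T_k\bigl(\boldsymbol{\eta^\mathrm{c}}\circ\boldsymbol{\hat p}^{\boldsymbol{\mathrm c}}_k-\boldsymbol{\eta^\mathrm{d}}\circ\boldsymbol{\hat p}^{\boldsymbol{\mathrm d}}_k\bigr)\Delta$, using linearity of $\circ$ in its second argument together with \eqref{agg_vars:stochr}--\eqref{agg_vars:stodischr}; this is precisely \eqref{MILP_aggregated_investment_model_storage_state}. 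The one bookkeeping subtlety I would state explicitly is the terminal index: the telescoping at $k=|\boldsymbol{K}|-1$ produces $\boldsymbol{s}_{|\boldsymbol{T}|}$, so $\boldsymbol{\hat s}_{|\boldsymbol{K}|}$ must be interpreted accordingly (equivalently, one extends \eqref{agg_vars:sto} by the convention $\min(\boldsymbol{T}_{|\boldsymbol{K}|})\coloneqq|\boldsymbol{T}|$). Having verified all of \eqref{MILP_aggregated_investment_model_power_balance}--\eqref{MILP_aggregated_investment_model_binary}, we conclude that $\boldsymbol{\hat z}$ is feasible for \eqref{MILP_aggregated_investment_model}, and hence --- by the second preliminary reduction --- also for \eqref{MIQP_aggregated_investment_model}, which completes the plan.
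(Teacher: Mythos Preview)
Your proposal is correct and follows essentially the same constraint-by-constraint verification as the paper's proof: both substitute the aggregation maps \eqref{agg_vars:inv}--\eqref{agg_vars:stodischr} into each aggregated constraint and show it is implied by the corresponding full-scale constraint, with the telescoping of \eqref{MILP_full_scale_investment_model_storage_state} over the consecutive block $\boldsymbol{T}_k$ (guaranteed by Assumption~I) yielding \eqref{MILP_aggregated_investment_model_storage_state}. One minor quibble: you assert that Assumption~I is ``genuinely used'' only for the storage recursion, but your verification of \eqref{MILP_aggregated_investment_model_storage_init} also tacitly relies on it to ensure that cluster $0$ contains period $0$, i.e., $\min(\boldsymbol{T}_0)=0$.
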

\begin{proof}
From \eqref{agg_vars:nsdemand}–\eqref{agg_vars:stodischr}, substituting $\boldsymbol{\hat{z}}$ into the energy balance constraints \eqref{MILP_aggregated_investment_model_power_balance} of the aggregated models \eqref{MILP_aggregated_investment_model} and \eqref{MIQP_aggregated_investment_model} yields:
\begin{equation}\label{lemma:feasible_aggregated_sol_proof1}
    \sum_{t \in \boldsymbol{T}_k} \left( \mathbf{1}_G^\top \, \boldsymbol{p}_{t} \Delta + \boldsymbol{1}^\top_N \left(\boldsymbol{p}^{\boldsymbol{\mathrm{d}}}_{t} - \boldsymbol{p}^{\boldsymbol{\mathrm{c}}}_{t}\right) \Delta + d^\mathrm{ns}_t \right) = \sum_{t \in \boldsymbol{T}_k} D_t, \quad \forall k.
\end{equation}

From \eqref{agg_vars:sto}, \eqref{agg_vars:stochr}, and \eqref{agg_vars:stodischr}, substituting $\boldsymbol{\hat{z}}$ into the storage dynamics \eqref{MILP_aggregated_investment_model_storage_state} of the aggregated models \eqref{MILP_aggregated_investment_model} and \eqref{MIQP_aggregated_investment_model} yields:
\begin{equation}\label{lemma:feasible_aggregated_sol_proof2}
    \boldsymbol{s}_{\mathrm{min}(\boldsymbol{T}_{k+1})} = \boldsymbol{s}_{\mathrm{min}(\boldsymbol{T}_k)} + \sum_{t \in \boldsymbol{T}_k} \left( \boldsymbol{\eta^\mathrm{c}} \circ \boldsymbol{p}^{\boldsymbol{\mathrm{c}}}_{t} \right) \Delta - \sum_{t \in \boldsymbol{T}_k} \left( \boldsymbol{\eta^\mathrm{d}} \circ \boldsymbol{p}^{\boldsymbol{\mathrm{d}}}_{t} \right) \Delta, \quad \forall k.
\end{equation}

From \eqref{agg_vars:stochr} and \eqref{agg_vars:stodischr}, substituting $\boldsymbol{\hat{z}}$ into the storage power limit constraints \eqref{MILP_aggregated_investment_model_storage_power_limits} of the aggregated models \eqref{MILP_aggregated_investment_model} and \eqref{MIQP_aggregated_investment_model} yields:
\begin{equation}\label{lemma:feasible_aggregated_sol_proof3}
    \boldsymbol{\underline{P}}^{\boldsymbol{\mathrm{s}}} \leq \frac{1}{T_k} \sum_{t \in \boldsymbol{T}_k} \boldsymbol{p}^{\boldsymbol{\mathrm{s}}}_{t} \leq \boldsymbol{\overline{P}}^{\boldsymbol{\mathrm{s}}}, \quad \forall k.
\end{equation}

From \eqref{agg_vars:inv}, \eqref{agg_vars:sto}, and \eqref{agg_vars:powergen}, substituting $\boldsymbol{\hat{z}}$ into the constraints \eqref{MILP_aggregated_investment_model_sto_limits} and \eqref{MILP_aggregated_investment_model_gen_limits} of the aggregated models \eqref{MILP_aggregated_investment_model} and \eqref{MIQP_aggregated_investment_model}, enforcing the storage capacity and power generation limits, yields:
\begin{align}
    & 0 \leq \boldsymbol{s}_{\mathrm{min}(\boldsymbol{T}_k)} \leq \boldsymbol{x}^{\boldsymbol{\mathrm{s}}} \, \Delta, \quad \forall k, \label{lemma:feasible_aggregated_sol_proof4}\\
    & 0 \leq \sum_{t \in \boldsymbol{T}_k} \boldsymbol{p}_{t} \leq \sum_{t \in \boldsymbol{T}_k} \left( \boldsymbol{x}^{\boldsymbol{\mathrm{p}}} \circ \boldsymbol{F}_{t} \right), \quad \forall k. \label{lemma:feasible_aggregated_sol_proof5}
\end{align}

Under Assumption~I, the individual constraints \eqref{MILP_full_scale_investment_model_power_balance}, \eqref{MILP_full_scale_investment_model_storage_state}, \eqref{MILP_full_scale_investment_model_storage_power_limits}, \eqref{MILP_full_scale_investment_model_sto_limits} and \eqref{MILP_full_scale_investment_model_gen_limits} of the full-scale models \eqref{MILP_full_scale_investment_model} and \eqref{MIQP_full_scale_investment_model} imply the aggregate constraints \eqref{lemma:feasible_aggregated_sol_proof1}, \eqref{lemma:feasible_aggregated_sol_proof2}, \eqref{lemma:feasible_aggregated_sol_proof3}, \eqref{lemma:feasible_aggregated_sol_proof4}, and \eqref{lemma:feasible_aggregated_sol_proof5}, respectively.
Moreover, it holds that $\boldsymbol{\hat{s}}_{0} = \boldsymbol{s}_{\mathrm{min}(\boldsymbol{T}_0)}$ and $\boldsymbol{s}_{\mathrm{min}(\boldsymbol{T}_0)} \coloneqq \boldsymbol{s}_{0}$, implying that the aggregate constraint \eqref{MILP_aggregated_investment_model_storage_init} is equivalent to the full-scale constraint \eqref{MILP_full_scale_investment_model_storage_init}.
Finally, from \eqref{agg_vars:inv} and \eqref{agg_vars:inv_bin}, substituting $\boldsymbol{\hat{z}}$ into the aggregate constraints \eqref{MILP_aggregated_investment_model_inv_limits} and \eqref{MILP_aggregated_investment_model_binary} results in constraints equivalent to \eqref{MILP_full_scale_investment_model_inv_limits} and \eqref{MILP_full_scale_investment_model_binary} of the full-scale models \eqref{MILP_full_scale_investment_model} and \eqref{MIQP_full_scale_investment_model}.

It follows that $\boldsymbol{\hat{z}}$, derived via \eqref{agg_vars:inv}–\eqref{agg_vars:stodischr} from a feasible solution $\boldsymbol{z}$ of the full-scale MILP model \eqref{MILP_full_scale_investment_model} or, equivalently, the MIQP model \eqref{MIQP_full_scale_investment_model}, constitutes a feasible solution to both the aggregated models \eqref{MILP_aggregated_investment_model} and \eqref{MIQP_aggregated_investment_model}.
\end{proof}

In essence, the theoretical result established in Lemma~\ref{lemma:feasible_aggregated_sol} implies that for every feasible solution of the full-scale models presented in Subsection~\ref{subsec:full_scale_models}, there exists a corresponding feasible solution to the aggregated models introduced in Subsection~\ref{subsec:aggregated_models}.
Thus, the feasible region of the aggregated models \eqref{MILP_aggregated_investment_model} and \eqref{MIQP_aggregated_investment_model} constitutes a relaxation of that of the full-scale models \eqref{MILP_full_scale_investment_model} and \eqref{MIQP_full_scale_investment_model}.
This result can be leveraged, as detailed in the following proposition, to demonstrate that the optimal objective function values of the aggregated models \eqref{MILP_aggregated_investment_model} and \eqref{MIQP_aggregated_investment_model} always provide valid lower bounds on the optimal objective function values of their full-scale counterparts, \eqref{MILP_full_scale_investment_model} and \eqref{MIQP_full_scale_investment_model}, respectively.

\begin{proposition}\label{prop:aggregated_bound}
Let $\boldsymbol{z}^\star$ and $\boldsymbol{\hat{z}}^\star$ denote optimal solutions of the full-scale MILP model \eqref{MILP_full_scale_investment_model} and the aggregated MILP model \eqref{MILP_aggregated_investment_model}, respectively.
Similarly, let $\boldsymbol{\bar{z}}^\star$ and $\boldsymbol{\hat{\bar{z}}}^\star$ denote optimal solutions of the full-scale MIQP model \eqref{MIQP_full_scale_investment_model} and the aggregated MIQP model \eqref{MIQP_aggregated_investment_model}, respectively.
Then, the following inequalities hold:

\noindent
\begin{minipage}{0.5\textwidth}
\begin{equation}\label{prop:aggregated_bound_result_MILP}
    \hat{J}\left(\boldsymbol{\hat{z}}^\star\right) \leq J\left(\boldsymbol{z}^\star\right),
\end{equation}
\end{minipage}%
\hfill
\begin{minipage}{0.5\textwidth}
\begin{equation}\label{prop:aggregated_bound_result_MIQP}
    \hat{\bar{J}}\left(\boldsymbol{\hat{\bar{z}}}^\star\right) \leq \bar{J}\left(\boldsymbol{\bar{z}}^\star\right).
\end{equation}
\end{minipage}
\end{proposition}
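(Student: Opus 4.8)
The plan is to combine Lemma~\ref{lemma:feasible_aggregated_sol} with a direct comparison of objective values, exploiting that the aggregation map \eqref{agg_vars:inv}--\eqref{agg_vars:stodischr} preserves the linear cost exactly and cannot increase the quadratic regularization term. For the MILP inequality \eqref{prop:aggregated_bound_result_MILP}, let $\boldsymbol{z}^\star$ be an optimal solution of the full-scale model \eqref{MILP_full_scale_investment_model}, and let $\boldsymbol{\hat{z}}$ be the aggregated vector obtained from $\boldsymbol{z}^\star$ via \eqref{agg_vars:inv}--\eqref{agg_vars:stodischr}; by Lemma~\ref{lemma:feasible_aggregated_sol}, $\boldsymbol{\hat{z}}$ is feasible for the aggregated model \eqref{MILP_aggregated_investment_model}. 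I would then verify $\hat{J}(\boldsymbol{\hat{z}}) = J(\boldsymbol{z}^\star)$: the investment term is unchanged because $\boldsymbol{\hat{x}} = \boldsymbol{x}$ by \eqref{agg_vars:inv}, and for the operational term, substituting $\boldsymbol{\hat{p}}_k = \tfrac{1}{T_k}\sum_{t\in\boldsymbol{T}_k}\boldsymbol{p}_t$ and $\hat{d}^\mathrm{ns}_k = \tfrac{1}{T_k}\sum_{t\in\boldsymbol{T}_k}d^\mathrm{ns}_t$ into $\sum_{k\in\boldsymbol{K}}T_k\bigl(\boldsymbol{C^\mathrm{p}}^\top\boldsymbol{\hat{p}}_k\Delta + C^\mathrm{ns}\hat{d}^\mathrm{ns}_k\bigr)$ cancels the weights $T_k$, and since $\{\boldsymbol{T}_k\}_{k\in\boldsymbol{K}}$ partitions $\boldsymbol{T}$ the resulting double sum collapses to $\sum_{t\in\boldsymbol{T}}\bigl(\boldsymbol{C^\mathrm{p}}^\top\boldsymbol{p}_t\Delta + C^\mathrm{ns}d^\mathrm{ns}_t\bigr)$. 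Since $\boldsymbol{\hat{z}}$ is feasible and $\boldsymbol{\hat{z}}^\star$ minimizes $\hat{J}$ over the aggregated feasible set, $\hat{J}(\boldsymbol{\hat{z}}^\star)\le \hat{J}(\boldsymbol{\hat{z}}) = J(\boldsymbol{z}^\star)$, which is \eqref{prop:aggregated_bound_result_MILP}.

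For the MIQP inequality \eqref{prop:aggregated_bound_result_MIQP} I would repeat the construction with $\boldsymbol{\bar{z}}^\star$ optimal for \eqref{MIQP_full_scale_investment_model} and $\boldsymbol{\hat{\bar{z}}}$ its aggregation via \eqref{agg_vars:inv}--\eqref{agg_vars:stodischr}, again feasible for \eqref{MIQP_aggregated_investment_model} by Lemma~\ref{lemma:feasible_aggregated_sol}. Decompose $\hat{\bar{J}} = \hat{J} + \hat{Q}$ and $\bar{J} = J + Q$ into linear and quadratic parts. The linear parts match, $\hat{J}(\boldsymbol{\hat{\bar{z}}}) = J(\boldsymbol{\bar{z}}^\star)$, by the argument above. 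For the quadratic parts, set $\boldsymbol{y}_t \coloneqq \mathbf{A}\boldsymbol{z}^{\boldsymbol{\mathrm{op}}}_t - \boldsymbol{Z}^{\boldsymbol{\mathrm{ref}}}_t$; using $\mathbf{A}\boldsymbol{\hat{z}}^{\boldsymbol{\mathrm{op}}}_k = \tfrac{1}{T_k}\sum_{t\in\boldsymbol{T}_k}\mathbf{A}\boldsymbol{z}^{\boldsymbol{\mathrm{op}}}_t$ together with the aggregated reference $\tfrac{1}{T_k}\sum_{t\in\boldsymbol{T}_k}\boldsymbol{Z}^{\boldsymbol{\mathrm{ref}}}_t$, the $k$-th cluster term of $\hat{Q}(\boldsymbol{\hat{\bar{z}}})$ equals $T_k\,\|\tfrac{1}{T_k}\sum_{t\in\boldsymbol{T}_k}\boldsymbol{y}_t\|_2^2$. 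Jensen's inequality for the convex function $\|\cdot\|_2^2$ then gives $T_k\,\|\tfrac{1}{T_k}\sum_{t\in\boldsymbol{T}_k}\boldsymbol{y}_t\|_2^2 \le \sum_{t\in\boldsymbol{T}_k}\|\boldsymbol{y}_t\|_2^2$, and summing over $k$ (a partition of $\boldsymbol{T}$) yields $\hat{Q}(\boldsymbol{\hat{\bar{z}}}) \le Q(\boldsymbol{\bar{z}}^\star)$. Hence $\hat{\bar{J}}(\boldsymbol{\hat{\bar{z}}}) \le \bar{J}(\boldsymbol{\bar{z}}^\star)$, and optimality of $\boldsymbol{\hat{\bar{z}}}^\star$ for the aggregated minimization gives $\hat{\bar{J}}(\boldsymbol{\hat{\bar{z}}}^\star) \le \hat{\bar{J}}(\boldsymbol{\hat{\bar{z}}}) \le \bar{J}(\boldsymbol{\bar{z}}^\star)$, i.e., \eqref{prop:aggregated_bound_result_MIQP}.

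I expect the quadratic term to be the delicate step. The MILP case is pure linear bookkeeping with the partition $\{\boldsymbol{T}_k\}$ and the cancellation of the weights $T_k$, but in the MIQP case the Jensen argument rests on the identity $\mathbf{A}\boldsymbol{\hat{z}}^{\boldsymbol{\mathrm{op}}}_k = \tfrac{1}{T_k}\sum_{t\in\boldsymbol{T}_k}\mathbf{A}\boldsymbol{z}^{\boldsymbol{\mathrm{op}}}_t$, which holds componentwise for the entries of $\boldsymbol{z}^{\boldsymbol{\mathrm{op}}}_t$ aggregated by averaging, namely $\boldsymbol{p}_t$, $d^\mathrm{ns}_t$, and $\boldsymbol{p}^{\boldsymbol{\mathrm{s}}}_t$ in \eqref{agg_vars:nsdemand}--\eqref{agg_vars:stodischr}; the storage-state component is aggregated as $\boldsymbol{\hat{s}}_k = \boldsymbol{s}_{\mathrm{min}(\boldsymbol{T}_k)}$ by \eqref{agg_vars:sto} rather than as a cluster mean, so I would need either to restrict $\mathbf{A}$ to have zero columns on the storage-state block or to treat that block with a separate argument. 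Once that point is handled, both inequalities close immediately from the optimality of the aggregated solutions.
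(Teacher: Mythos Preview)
Your argument is exactly the one the paper gives: apply Lemma~\ref{lemma:feasible_aggregated_sol} to the full-scale optimum, verify that the linear cost is preserved under the aggregation \eqref{agg_vars:inv}--\eqref{agg_vars:stodischr} via the partition $\{\boldsymbol{T}_k\}$, invoke Jensen's inequality on the quadratic term, and conclude by optimality of the aggregated minimizer. The caveat you raise about the storage-state block of $\boldsymbol{\hat{z}}^{\boldsymbol{\mathrm{op}}}_k$ not being a cluster average is well spotted; the paper's proof writes $\mathbf{A}\boldsymbol{\hat{z}}^{\boldsymbol{\mathrm{op}}}_k = \tfrac{1}{T_k}\sum_{t\in\boldsymbol{T}_k}\mathbf{A}\boldsymbol{z}^{\boldsymbol{\mathrm{op}}}_t$ without comment, so you are in fact being more careful than the published argument on this point.
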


\begin{proof}
We begin by demonstrating the inequality \eqref{prop:aggregated_bound_result_MILP} for the aggregated model \eqref{MILP_aggregated_investment_model} and its full-scale counterpart \eqref{MILP_full_scale_investment_model}.

By Lemma~\ref{lemma:feasible_aggregated_sol},
for any feasible solution $\boldsymbol{z}$ of the full-scale MILP model~\eqref{MILP_full_scale_investment_model},
there exists a corresponding feasible solution $\boldsymbol{\hat{z}}$ of the aggregated MILP model \eqref{MIQP_aggregated_investment_model},
constructed according to \eqref{agg_vars:inv}--\eqref{agg_vars:stodischr}.
Then, substituting $\boldsymbol{\hat{z}}$ into the aggregate objective function $\hat{J}(\boldsymbol{\hat{z}})$ of the aggregated model \eqref{MILP_aggregated_investment_model} yields:
\begin{align*}
    \hat{J}(\boldsymbol{\hat{z}}) = & \; {\boldsymbol{C}^{\boldsymbol{\mathrm{inv}}}}^\top \boldsymbol{x} + \sum_{k \in \boldsymbol{K}} T_k \left( \frac{1}{T_k} \sum_{t \in \boldsymbol{T}_k} {\boldsymbol{C}^{\boldsymbol{\mathrm{p}}}}^\top \boldsymbol{p}_{t} \, \Delta + \frac{1}{T_k} \sum_{t \in \boldsymbol{T}_k} C^\mathrm{ns} d^\mathrm{ns}_t \right) \\
    = & \; {\boldsymbol{C}^{\boldsymbol{\mathrm{inv}}}}^\top \boldsymbol{x} +  \sum_{k \in \boldsymbol{K}} \sum_{t \in \boldsymbol{T}_k} \left(\, {\boldsymbol{C}^{\boldsymbol{\mathrm{p}}}}^\top \boldsymbol{p}_{t} \, \Delta + C^\mathrm{ns} d^\mathrm{ns}_t \right) \\
    = & \; {\boldsymbol{C}^{\boldsymbol{\mathrm{inv}}}}^\top \boldsymbol{x} +  \sum_{t \in \boldsymbol{T}} \left( {\boldsymbol{C}^{\boldsymbol{\mathrm{p}}}}^\top \boldsymbol{p}_{t} \, \Delta + C^\mathrm{ns} d^\mathrm{ns}_t \right) \\
    = & \; J(\boldsymbol{z}),
\end{align*}
where $J(\boldsymbol{z})$ is defined as in \eqref{MILP_full_scale_investment_model_obj}.

It follows that every feasible solution $\boldsymbol{z}$ of the full-scale model~\eqref{MILP_full_scale_investment_model} maps to a feasible solution $\boldsymbol{\hat{z}}$ of the aggregated model~\eqref{MILP_aggregated_investment_model},
yielding the same objective function value.
In particular, this holds for any full-scale optimum $\boldsymbol{z}^\star$. 
Thus, the optimal objective function value of the aggregated model~\eqref{MILP_aggregated_investment_model} provides a valid lower bound on that of the full-scale model~\eqref{MILP_full_scale_investment_model}, i.e., $\hat{J}\left(\boldsymbol{\hat{z}}^\star\right) \leq J\left(\boldsymbol{z}^\star\right)$.

An analogous approach can be used to derive the result in \eqref{prop:aggregated_bound_result_MIQP} for the MIQP model.
By Lemma~\ref{lemma:feasible_aggregated_sol}, for any feasible solution $\boldsymbol{z}$ of the full-scale MIQP model~\eqref{MIQP_full_scale_investment_model}, there exists a corresponding feasible solution $\boldsymbol{\hat{z}}$ of the aggregated MIQP model~\eqref{MIQP_aggregated_investment_model}, constructed according to \eqref{agg_vars:inv}–\eqref{agg_vars:stodischr}. Moreover, evaluating $\boldsymbol{\hat{z}}$ in the objective function $\hat{\bar{J}}(\boldsymbol{\hat{z}})$ of the aggregated model~\eqref{MIQP_aggregated_investment_model} yields:
\begin{align*}
    \hat{\bar{J}}(\boldsymbol{\hat{z}}) = & \; J(\boldsymbol{z}) + \sum_{k \in \boldsymbol{K}} T_k \left\|\frac{1}{T_k} \sum_{t \in \boldsymbol{T}_k} \left(\mathbf{A} \boldsymbol{z}^{\boldsymbol{\mathrm{op}}}_t - \boldsymbol{Z}^{\boldsymbol{\mathrm{ref}}}_t \right) \right\|_2^2 \\
    \leq & \; J(\boldsymbol{z}) + \sum_{t \in \boldsymbol{T}} \left\| \left(\mathbf{A} \, \boldsymbol{z}^{\boldsymbol{\mathrm{op}}}_t - \boldsymbol{Z}^{\boldsymbol{\mathrm{ref}}}_t \right) \right\|_2^2 = \bar{J}(\boldsymbol{z}),
\end{align*}
where the inequality follows directly from Jensen’s inequality.
\end{proof}

\subsection{A Time Series Aggregation Method with a Performance Guarantee}
\label{subsec:performance_guaranteed_TSA}
Building upon the theoretical results from Subsection~\ref{subsec:properties_aggregated_models},
we propose an iterative TSA method that offers a formal performance guarantee in the form of objective function bounds.
At each iteration, the method computes both upper and lower bounds on the optimal objective value of the full-scale MILP or the full-scale MIQP model \eqref{MILP_full_scale_investment_model} and \eqref{MIQP_full_scale_investment_model}, respectively,
by leveraging the solutions from their aggregated counterparts \eqref{MILP_aggregated_investment_model} and \eqref{MIQP_aggregated_investment_model}, respectively.

From Proposition~\ref{prop:aggregated_bound}, the aggregated models in Subsection~\ref{subsec:aggregated_models} provide valid lower bounds on the optimal objective values of their full-scale counterparts.
Moreover, we remark that the full-scale models in Subsection~\ref{subsec:full_scale_models} are two-stage optimization problems,
where the investment variable $\boldsymbol{x}$ and the associated binary variable $\boldsymbol{b}$ are the first-stage decisions,
while the operational variables in $\boldsymbol{z}^{\boldsymbol{\mathrm{op}}}_t$ are the second-stage decisions at time period $t$.
By fixing the first-stage decisions to the corresponding values obtained from the aggregated models, the full-scale MILP and MIQP models reduce to the convex models:
\noindent
\begin{minipage}{0.5\textwidth}
    \begin{subequations}\label{LP_MILP_full_scale_investment_model}
    \begin{align}
    \min_{\boldsymbol{z}} \quad & J(\boldsymbol{z})\\
    \textrm{s.t.} \quad & \eqref{MILP_full_scale_investment_model_power_balance}-\eqref{MILP_full_scale_investment_model_inv_limits}, \nonumber \\
    & \boldsymbol{b} = \boldsymbol{\hat{b}}^\star.
    \end{align}
    \end{subequations}
\end{minipage}
\hfill
\begin{minipage}{0.5\textwidth}
    \begin{subequations}\label{LP_MIQP_full_scale_investment_model}
    \begin{align}
    \min_{\boldsymbol{z}} \quad & \bar{J}(\boldsymbol{z})\\
    \textrm{s.t.} \quad & \eqref{MILP_full_scale_investment_model_power_balance}-\eqref{MILP_full_scale_investment_model_inv_limits}, \nonumber \\
    & \boldsymbol{b} = \boldsymbol{\hat{\bar{b}}}^\star.
    \end{align}
    \end{subequations}
\end{minipage}
In \eqref{LP_MILP_full_scale_investment_model} and \eqref{LP_MIQP_full_scale_investment_model},
we denote by $\boldsymbol{\hat{b}}^\star$ and $\boldsymbol{\hat{\bar{b}}}^\star$ the optimal binary variable values obtained by solving the aggregated models \eqref{MILP_aggregated_investment_model} and \eqref{MIQP_aggregated_investment_model}, respectively.
Since \eqref{LP_MILP_full_scale_investment_model} and \eqref{LP_MIQP_full_scale_investment_model} are constrained versions of the full-scale models \eqref{MILP_full_scale_investment_model} and \eqref{MIQP_full_scale_investment_model}, respectively, their optimal objective function values provide upper bounds on those of the full-scale models.
Moreover, as the aggregated models retain full resolution of the binary variables, any solution to the convex models \eqref{LP_MILP_full_scale_investment_model} and \eqref{LP_MIQP_full_scale_investment_model} is also feasible for the corresponding full-scale models.

By increasing the number and/or refining the quality of the representative time periods in the aggregated models,
the proposed TSA method iteratively refines the upper and lower bounds on the optimal objective function values of the full-scale models in Subsection~\ref{subsec:full_scale_models}, as detailed in Algorithm~\ref{alg:TSA_algorithm}.

\begin{algorithm}[!h]
\caption{Time Series Aggregation with a Performance Guarantee}\label{alg:TSA_algorithm}
\begin{algorithmic}[1]
\Require $\Bigl\{\boldsymbol{C}^{\boldsymbol{\mathrm{inv}}}, \boldsymbol{C^\mathrm{p}}, C^\mathrm{ns}, \Delta, \boldsymbol{\eta^\mathrm{c}}, \boldsymbol{\eta^\mathrm{d}}, \boldsymbol{S^0}, \underline{\boldsymbol{P}}^{\boldsymbol{\mathrm{s}}}, \overline{\boldsymbol{P}}^{\boldsymbol{\mathrm{s}}}, \underline{\boldsymbol{X}}, \overline{\boldsymbol{X}}, D_t, \boldsymbol{F}_{t}, \boldsymbol{Z}^{\boldsymbol{\mathrm{ref}}}_t \; | \; t \in \boldsymbol{T} \Bigl\}$, initial number of clusters $K^0$, step size $\rho$, optimality threshold $\epsilon^\mathrm{thr}$, and maximum number of iterations $I$.

\Ensure A feasible solution $\boldsymbol{z}^\mathrm{feasible}$, and the final objective function bounds ${{J^\mathrm{UB}}}^\star$ and ${{J^\mathrm{LB}}}^\star$.

\State \textit{Initialization}: $i \gets 0$; $\epsilon^0 \gets +\infty$;

\While{$\epsilon^i > \epsilon^{\mathrm{thr}}$ and $i \leq I$}

\State Assign the time periods $t \in \boldsymbol{T}$ to $\left\{\boldsymbol{T}_k^i \, | \, k \in \boldsymbol{K}^i\right\}$ using any clustering technique with $K^i$ clusters, under Assumption I;

\If{MILP GEP model}
\State $\left\{ \tilde{J}^\mathrm{LB}, \boldsymbol{\hat{z}}^\star \right\} \gets$ Solve aggregated MILP \eqref{MILP_aggregated_investment_model} for $\left\{\boldsymbol{T}_k^i \, | \, k \in \boldsymbol{K}^i\right\}$;

\State $\left\{\boldsymbol{z}^{\mathrm{feasible}, i}, {\tilde{J}^\mathrm{UB}} \right\} \gets$ Solve \eqref{LP_MILP_full_scale_investment_model} with fixed binaries $\boldsymbol{\hat{b}}^\star$ from $\boldsymbol{\hat{z}}^\star$;
\EndIf

\If{MIQP GEP model}
\State $\left\{ \tilde{J}^\mathrm{LB},\boldsymbol{\hat{\bar{z}}}^\star \right\} \gets$ Solve aggregated MIQP \eqref{MIQP_aggregated_investment_model} for $\left\{\boldsymbol{T}_k^i \, | \, k \in \boldsymbol{K}^i\right\}$;

\State $\left\{\boldsymbol{z}^{\mathrm{feasible}, i}, {\tilde{J}^\mathrm{UB}} \right\} \gets$ Solve \eqref{LP_MIQP_full_scale_investment_model} with fixed binaries $\boldsymbol{\hat{\bar{b}}}^\star$ from $\boldsymbol{\hat{\bar{z}}}^\star$;
\EndIf

\If{$i = 0$} ${J^\mathrm{LB}}^{i + 1} \gets \tilde{J}^\mathrm{LB}$ and ${J^\mathrm{UB}}^{i + 1} \gets \tilde{J}^\mathrm{UB}$;
\Else $\; {J^\mathrm{LB}}^{i + 1} \gets \mathrm{max}\left({J^\mathrm{LB}}^{i}, \tilde{J}^\mathrm{LB}\right)$ and ${J^\mathrm{UB}}^{i + 1} \gets \mathrm{min}\left({J^\mathrm{UB}}^{i}, \tilde{J}^\mathrm{UB}\right)$;
\EndIf
\State $\epsilon^{i+1} \gets \text{Evaluate \eqref{optimality_gap}}$ for ${J^\mathrm{LB}}^{i + 1}$ and ${J^\mathrm{UB}}^{i + 1}$;

\State $K^{i+1} \gets K^i + \rho$;

\State $i \gets i+1$;

\EndWhile

\State $\boldsymbol{z}^\mathrm{feasible} \gets \boldsymbol{z}^{\mathrm{feasible}, i}$, ${{J^\mathrm{UB}}}^\star \gets {J^\mathrm{UB}}^i$ and ${{J^\mathrm{LB}}}^\star \gets {J^\mathrm{LB}}^i$;

\end{algorithmic}
\end{algorithm}

Let $i$ denote the current iteration of the algorithm
and $I$ the maximum number of iterations.
Let $K^0$ denote the initial number of representative time periods (or clusters).
The derived upper and lower bounds are represented by $J^\mathrm{UB}$ and $J^\mathrm{LB}$, respectively.
At each iteration, the number of representative time periods is increased by a step size parameter $\rho$, and the bounds, along with a new feasible solution $\boldsymbol{z}^\mathrm{feasible}$ for the full-scale model, are recomputed.
The algorithm terminates when the optimality gap $\epsilon$, defined as
\begin{equation}\label{optimality_gap}
    \epsilon := \frac{J^\mathrm{UB} - J^\mathrm{LB}}{J^\mathrm{UB}},
\end{equation}
falls below a predefined threshold $\epsilon^\mathrm{thr}$.

Notably, the proposed algorithm not only provides valid objective function bounds regardless of the clustering technique employed for TSA,
but also generates a feasible solution to the full-scale models at each iteration.

\subsection{A Comparison with Benders Decomposition}
\label{subsec:comparison_benders_decomposition}

The proposed Algorithm~\ref{alg:TSA_algorithm}, detailed in the previous subsection, shares key similarities with the popular Benders decomposition method \cite{constante2025relaxation}.
As previously discussed, the full-scale models under consideration are two-stage problems, where the investment variables (referred to as \textit{complicating variables} in the context of decomposition methods) link the first and second stages.
When applied to these models, Benders decomposition separates the GEP problem into a \textit{master problem},
which determines the complicating variables and provides a lower bound on the optimal objective function value,
and a \textit{subproblem},
which optimizes the second-stage variables for fixed investment decisions, providing an upper bound on the optimal objective function value.
In the proposed Algorithm~\ref{alg:TSA_algorithm}, the aggregated models introduced in Subsection~\ref{subsec:aggregated_models} play the role of the Benders master problem, and the convex models \eqref{LP_MILP_full_scale_investment_model} and \eqref{LP_MIQP_full_scale_investment_model} correspond to the Benders subproblem.

While the Benders subproblem and the convex models in Algorithm~\ref{alg:TSA_algorithm} are analogous, the methods differ in how the investment variables are determined.
In Benders decomposition, the master problem includes only first-stage variables and constraints, and convergence is driven by Benders cuts generated from the subproblem, which accumulate over the iterations and progressively increase the master problem size.
In contrast, Algorithm~\ref{alg:TSA_algorithm} retains both first- and second-stage information, reducing dimensionality through variable aggregation.
Ultimately, the computational efficiency of Benders decomposition depends on the number of cuts generated, while that of Algorithm~\ref{alg:TSA_algorithm} depends on the number of representative periods (or clusters) employed for TSA.

A further distinction between Benders decomposition and the proposed TSA-based Algorithm~\ref{alg:TSA_algorithm} concerns the properties of the lower bounds.
Benders decomposition guarantees monotonically non-decreasing lower bounds \cite{constante2025relaxation}, a desirable property in optimization.
In contrast, the bounds in Algorithm~\ref{alg:TSA_algorithm} depend on the clustering technique employed.
Techniques such as k-means, which incorporate randomness, do not ensure that increasing the number of representative periods improves the aggregated model representativeness.
To provide a performance guarantee independent of the clustering technique, Algorithm~\ref{alg:TSA_algorithm} includes a conditional check to verify the lower bound improvement at each iteration.
Alternatively, a recursive clustering scheme that retains and refines partitions as the number of clusters increases can enforce monotonicity, yielding bounds akin to those of Benders decomposition.

Finally, a key potential advantage of the proposed Algorithm~\ref{alg:TSA_algorithm} compared to Benders decomposition resides in its greater flexibility.
Specifically, while the convergence of Benders decomposition depends on the quality of the Benders cuts, which are derived internally,
the convergence rate of Algorithm~\ref{alg:TSA_algorithm} is directly influenced by the representativeness of the aggregated model, which, in turn, depends on the selection of the representative periods.
As the decision-maker controls this selection, Algorithm~\ref{alg:TSA_algorithm} offers greater flexibility, enabling direct manipulation of the convergence rate.
Clearly, an accurate selection of the representative periods is contingent upon the inherent characteristics of the model, necessitating a thorough analysis of both the model's structure and the input time series in each specific case study.

\subsection{What Are We Clustering For?}
\label{subsec:what_are_we_clustering_for}
While the bounds derived in Subsections \ref{subsec:properties_aggregated_models} and \ref{subsec:performance_guaranteed_TSA} allow for approximating the optimal objective function value of the GEP model,
they do not directly inform the modeler about specific cost components (e.g., investment or operational costs) or individual decisions (e.g., technology-specific investments).
This raises the question: \textit{what are we clustering for?}—that is, which specific metric is the modeler seeking to approximate when performing TSA?
If the goal is to approximate the total cost, Algorithm~\ref{alg:TSA_algorithm} is effective.
However, when the focus shifts to stakeholder-specific metrics (e.g., the minimum wind capacity investment that preserves the global optimality gap of Algorithm~\ref{alg:TSA_algorithm}), additional steps are needed to interpret and refine the objective function bounds.
This subsection discusses strategies to address this need.

As a first observation, the interpretation of the bounds in Algorithm~\ref{alg:TSA_algorithm} is directly linked to the temporal scope of the GEP model.
When the horizon is limited, the investment costs dominate the objective function.
Conversely, as the horizon extends, the bounds primarily reflect the expected operational cost of the system,
as formalized in the following theoretical result.

\begin{proposition}\label{prop:long_term_operational_bounds}
Consider the infinite horizon GEP problem,
defined in the limit as the time horizon grows unbounded, i.e., $|\boldsymbol{T}| \to \infty$,
and formulated as the following full-scale MILP and MIQP models:

\noindent
\begin{minipage}{0.5\textwidth}
\upshape
\begin{subequations}\label{MILP_full_scale_investment_model_inf_horizon}
    \begin{align}
    \min_{\boldsymbol{z}} \quad & \lim_{|\boldsymbol{T}| \to \infty} \frac{1}{|\boldsymbol{T}|} \, J(\boldsymbol{z}) \label{MILP_full_scale_investment_model_inf_horizon_obj}\\
    \textrm{s.t.} \quad & \eqref{MILP_full_scale_investment_model_power_balance}-\eqref{MILP_full_scale_investment_model_binary}.
    \end{align}
    \end{subequations}
\end{minipage}
\hfill
\begin{minipage}{0.5\textwidth}
    \upshape
\begin{subequations}\label{MIQP_full_scale_investment_model_inf_horizon}
    \begin{align}
    \min_{\boldsymbol{z}} \quad & \lim_{|\boldsymbol{T}| \to \infty} \frac{1}{|\boldsymbol{T}|} \, \bar{J}(\boldsymbol{z}) \label{MIQP_full_scale_investment_model_inf_horizon_obj}\\
    \textrm{s.t.} \quad & \eqref{MILP_full_scale_investment_model_power_balance}-\eqref{MILP_full_scale_investment_model_binary}.
    \end{align}
    \end{subequations}
\end{minipage}

Let $J^{\mathrm{LB\infty}}$ and $J^{\mathrm{UB\infty}}$ denote the lower and upper bounds, respectively, obtained via Algorithm~\ref{alg:TSA_algorithm} for the MILP model~\eqref{MILP_full_scale_investment_model_inf_horizon},
and let $\bar{J}^{\,\mathrm{LB\infty}}$ and $\bar{J}^{\,\mathrm{UB\infty}}$ denote the corresponding bounds for the MIQP model~\eqref{MIQP_full_scale_investment_model_inf_horizon}.

Then, the following inequalities hold:
\begin{equation}\label{MILP_inf_bounds}
    J^{\mathrm{LB\infty}} \leq \mathbb{E}\left[ \boldsymbol{C^\mathrm{p}}^\top \boldsymbol{p}_{t} \, \Delta + C^\mathrm{ns} d^\mathrm{ns}_t \right] \leq J^{\mathrm{UB\infty}},
\end{equation}
\begin{equation}\label{MIQP_inf_bounds}
    \bar{J}^{\,\mathrm{LB\infty}} \leq \mathbb{E}\left[ \boldsymbol{C^\mathrm{p}}^\top \boldsymbol{p}_{t} \, \Delta + C^\mathrm{ns} d^\mathrm{ns}_t + \left\| \mathbf{A} \boldsymbol{z}^{\boldsymbol{\mathrm{op}}}_t - \boldsymbol{Z}^{\boldsymbol{\mathrm{ref}}}_t \right\|_2^2 \right] \leq \bar{J}^{\,\mathrm{UB\infty}},
\end{equation}
where $\mathbb{E}[\cdot]$ denotes the expected value.
\end{proposition}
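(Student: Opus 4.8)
The plan is to exploit the two-stage structure emphasized in Subsection~\ref{subsec:performance_guaranteed_TSA} together with the bounding results of Proposition~\ref{prop:aggregated_bound}, noting that multiplying every objective by the common positive factor $1/|\boldsymbol{T}|$ preserves all the inequalities and feasibility relations used there. First I would decompose the scaled full-scale objective as $\tfrac{1}{|\boldsymbol{T}|} J(\boldsymbol{z}) = \tfrac{1}{|\boldsymbol{T}|}\boldsymbol{C^\mathrm{inv}}^\top\boldsymbol{x} + \tfrac{1}{|\boldsymbol{T}|}\sum_{t\in\boldsymbol{T}}\bigl(\boldsymbol{C^\mathrm{p}}^\top\boldsymbol{p}_t\,\Delta + C^\mathrm{ns} d^\mathrm{ns}_t\bigr)$ and observe that, by constraint~\eqref{MILP_full_scale_investment_model_inv_limits}, $\boldsymbol{x}$ is bounded uniformly in $|\boldsymbol{T}|$ by $\overline{\boldsymbol{X}}$; hence the amortized investment term vanishes as $|\boldsymbol{T}|\to\infty$, and what remains is precisely the long-run time average of the per-period operational cost, i.e.\ the quantity written $\mathbb{E}\bigl[\boldsymbol{C^\mathrm{p}}^\top\boldsymbol{p}_t\,\Delta + C^\mathrm{ns} d^\mathrm{ns}_t\bigr]$ in \eqref{MILP_inf_bounds}. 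Evaluating at a full-scale optimum $\boldsymbol{z}^\star$ of \eqref{MILP_full_scale_investment_model_inf_horizon} then gives $\lim_{|\boldsymbol{T}|\to\infty}\tfrac{1}{|\boldsymbol{T}|}J(\boldsymbol{z}^\star) = \mathbb{E}\bigl[\boldsymbol{C^\mathrm{p}}^\top\boldsymbol{p}_t\,\Delta + C^\mathrm{ns} d^\mathrm{ns}_t\bigr]$.

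Next I would sandwich this limiting value between the two bounds produced by Algorithm~\ref{alg:TSA_algorithm}. For the lower bound, each iterate $\tilde{J}^\mathrm{LB}$ is the optimal value of an aggregated model, which by Proposition~\ref{prop:aggregated_bound} — applied verbatim to the $1/|\boldsymbol{T}|$-scaled models, since the relaxation argument of Lemma~\ref{lemma:feasible_aggregated_sol} is unaffected by uniform scaling of the objective — does not exceed the scaled full-scale optimum; the running maximum taken in Algorithm~\ref{alg:TSA_algorithm} preserves this, so $J^{\mathrm{LB\infty}}\le \lim_{|\boldsymbol{T}|\to\infty}\tfrac{1}{|\boldsymbol{T}|}J(\boldsymbol{z}^\star)$. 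For the upper bound, the solution of \eqref{LP_MILP_full_scale_investment_model} obtained at each iteration is feasible for the full-scale model — the aggregated model retains full binary resolution, as noted in Subsection~\ref{subsec:performance_guaranteed_TSA} — so its value $\tilde{J}^\mathrm{UB}$ is at least the full-scale optimum, and the running minimum preserves this, giving $J^{\mathrm{UB\infty}}\ge \lim_{|\boldsymbol{T}|\to\infty}\tfrac{1}{|\boldsymbol{T}|}J(\boldsymbol{z}^\star)$. Chaining the three relations yields \eqref{MILP_inf_bounds}.

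The MIQP inequality \eqref{MIQP_inf_bounds} follows by the identical argument with $\bar J$ in place of $J$: the only structural fact used is that the objective splits into the horizon-independent investment term $\boldsymbol{C^\mathrm{inv}}^\top\boldsymbol{x}$ plus a sum over $t\in\boldsymbol{T}$ of per-period contributions, and for $\bar J$ that contribution is $\boldsymbol{C^\mathrm{p}}^\top\boldsymbol{p}_t\,\Delta + C^\mathrm{ns} d^\mathrm{ns}_t + \|\mathbf{A}\boldsymbol{z}^{\boldsymbol{\mathrm{op}}}_t - \boldsymbol{Z}^{\boldsymbol{\mathrm{ref}}}_t\|_2^2$. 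The two bounding steps invoke only Proposition~\ref{prop:aggregated_bound} and the feasibility construction of \eqref{LP_MIQP_full_scale_investment_model}, both of which already cover the MIQP model, so no new ingredient is needed.

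I expect the main obstacle to be a matter of rigor rather than of ideas: one must make precise the meaning of the limit $|\boldsymbol{T}|\to\infty$, in particular that $\lim_{|\boldsymbol{T}|\to\infty}\tfrac{1}{|\boldsymbol{T}|}J(\boldsymbol{z}^\star)$ exists and that the optimal operational trajectory admits a well-defined long-run average, so that the symbol $\mathbb{E}[\cdot]$ is unambiguous — effectively a stationarity/ergodicity-type assumption on the input sequences $D_t$, $\boldsymbol{F}_t$, $\boldsymbol{Z}^{\boldsymbol{\mathrm{ref}}}_t$, which I would state explicitly before the proof. Once existence of the limit is granted, the conclusion is an immediate consequence of Proposition~\ref{prop:aggregated_bound}, the vanishing of the amortized investment cost, and the feasibility of the Algorithm~\ref{alg:TSA_algorithm} iterates.
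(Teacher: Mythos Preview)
Your proposal is correct and follows essentially the same route as the paper: show that the amortized investment term vanishes so the limiting objective reduces to the long-run average operational cost, then sandwich that quantity between the Algorithm~\ref{alg:TSA_algorithm} bounds. The paper's proof is terser---it invokes the strong law of large numbers to pass from the time average to $\mathbb{E}[\cdot]$ and leaves the sandwiching implicit in the definitions of $J^{\mathrm{LB\infty}}$, $J^{\mathrm{UB\infty}}$---whereas you spell out both the boundedness of $\boldsymbol{x}$ via~\eqref{MILP_full_scale_investment_model_inv_limits} and the explicit appeal to Proposition~\ref{prop:aggregated_bound} and the feasibility of~\eqref{LP_MILP_full_scale_investment_model}; your caveat about needing a stationarity/ergodicity hypothesis to make the limit and $\mathbb{E}[\cdot]$ well defined is a fair point that the paper glosses over.
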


\begin{proof}
We first demonstrate the result in~\eqref{MILP_inf_bounds} for the MILP model \eqref{MILP_full_scale_investment_model_inf_horizon}.

By expanding the objective function  \eqref{MILP_full_scale_investment_model_inf_horizon_obj}, we obtain:
\begin{multline*}
    \lim_{|\boldsymbol{T}| \to \infty} \frac{1}{|\boldsymbol{T}|} \left( \boldsymbol{C^\mathrm{inv}}^\top \boldsymbol{x} + \sum_{t \in \boldsymbol{T}} \left( \boldsymbol{C^\mathrm{p}}^\top \boldsymbol{p}_{t} \, \Delta + C^\mathrm{ns} \, d^\mathrm{ns}_t\right) \right)\\
    = \lim_{|\boldsymbol{T}| \to \infty} \frac{1}{|\boldsymbol{T}|} \sum_{t \in \boldsymbol{T}} \left( \boldsymbol{C^\mathrm{p}}^\top \boldsymbol{p}_{t} \, \Delta + C^\mathrm{ns} \, d^\mathrm{ns}_t\right) = \mathbb{E}\left[ \boldsymbol{C^\mathrm{p}}^\top \boldsymbol{p}_{t} \, \Delta + C^\mathrm{ns} \, d^\mathrm{ns}_t \right],
\end{multline*}
where the last equality follows from the strong law of large numbers.

Similarly, for the MIQP model \eqref{MIQP_full_scale_investment_model_inf_horizon}, we obtain:
\begin{multline*}
    \lim_{|\boldsymbol{T}| \to \infty} \frac{1}{|\boldsymbol{T}|} \left(J(\boldsymbol{z}) + \left\| \mathbf{A} \boldsymbol{z}^{\boldsymbol{\mathrm{op}}}_t - \boldsymbol{Z}^{\boldsymbol{\mathrm{ref}}}_t \right\|_2^2\right)\\
    = \mathbb{E}\left[ \boldsymbol{C^\mathrm{p}}^\top \boldsymbol{p}_{t} \, \Delta + C^\mathrm{ns} \, d^\mathrm{ns}_t \right] + \mathbb{E}\left[ \left\| \mathbf{A} \boldsymbol{z}^{\boldsymbol{\mathrm{op}}}_t - \boldsymbol{Z}^{\boldsymbol{\mathrm{ref}}}_t \right\|_2^2 \right],
\end{multline*}
from which the result in \eqref{MIQP_inf_bounds} follows directly.
\end{proof}

In essence, Proposition~\ref{prop:long_term_operational_bounds} provides a practical interpretation of the objective function bounds derived via Algorithm~\ref{alg:TSA_algorithm}, 
indicating that they could be employed to closely approximate the total investment costs in short-term GEP, and the total operational costs in long-term GEP.

While Proposition~\ref{prop:long_term_operational_bounds} facilitates interpreting the objective function bounds of Algorithm~\ref{alg:TSA_algorithm} in terms of the GEP model’s temporal scope,
the challenge persists in tightening these bounds for alternative stakeholder-specific metrics beyond the total investment and operational costs.
To rigorously derive error bounds for any stakeholder-specific metric, an auxiliary optimization step can be introduced at the final iteration of Algorithm~\ref{alg:TSA_algorithm}.
Let $M(\boldsymbol{z})$ denote the stakeholder-specific metric. Then, upper and lower bounds on $M(\boldsymbol{z})$, consistent with the global optimality gap achieved by Algorithm~\ref{alg:TSA_algorithm}, can be computed by solving the metric-specific models

\noindent
\begin{minipage}{0.49\textwidth}
    \begin{subequations}\label{metric_max_MILP}
    \begin{align}
    \max_{\boldsymbol{z}} \quad & M(\boldsymbol{z})\\
    \textrm{s.t.} \quad & \eqref{MILP_full_scale_investment_model_power_balance}-\eqref{MILP_full_scale_investment_model_binary}, \nonumber \\
    & J^\mathrm{LB} \leq J(\boldsymbol{z}) \leq J^\mathrm{UB}, \label{metric_max_MILP_bounds}
    \end{align}
    \end{subequations}
\end{minipage}
\hfill
\begin{minipage}{0.49\textwidth}
    \begin{subequations}\label{metric_min_MILP}
    \begin{align}
    \min_{\boldsymbol{z}} \quad & M(\boldsymbol{z})\\
    \textrm{s.t.} \quad & \eqref{MILP_full_scale_investment_model_power_balance}-\eqref{MILP_full_scale_investment_model_binary}, \nonumber \\
    & J^\mathrm{LB} \leq J(\boldsymbol{z}) \leq J^\mathrm{UB}, \label{metric_min_MILP_bounds}
    \end{align}
    \end{subequations}
\end{minipage}
when considering the MILP model \eqref{MILP_full_scale_investment_model}, or the metric-specific models

\noindent
\begin{minipage}{0.49\textwidth}
    \begin{subequations}\label{metric_max_MIQP}
    \begin{align}
    \max_{\boldsymbol{z}} \quad & M(\boldsymbol{z})\\
    \textrm{s.t.} \quad & \eqref{MILP_full_scale_investment_model_power_balance}-\eqref{MILP_full_scale_investment_model_binary}, \nonumber \\
    & \bar{J}^{\,\mathrm{LB}} \leq \bar{J}(\boldsymbol{z}) \leq \bar{J}^{\,\mathrm{UB}}, \label{metric_max_MIQP_bounds}
    \end{align}
    \end{subequations}
\end{minipage}
\hfill
\begin{minipage}{0.49\textwidth}
    \begin{subequations}\label{metric_min_MIQP}
    \begin{align}
    \min_{\boldsymbol{z}} \quad & M(\boldsymbol{z})\\
    \textrm{s.t.} \quad & \eqref{MILP_full_scale_investment_model_power_balance}-\eqref{MILP_full_scale_investment_model_binary}, \nonumber \\
    & \bar{J}^{\,\mathrm{LB}} \leq \bar{J}(\boldsymbol{z}) \leq \bar{J}^{\,\mathrm{UB}}, \label{metric_min_MIQP_bounds}
    \end{align}
    \end{subequations}
\end{minipage}
when considering the MIQP model \eqref{MIQP_full_scale_investment_model}.

In words, solving the optimization models \eqref{metric_max_MILP}–\eqref{metric_min_MILP} for the MILP formulation, and \eqref{metric_max_MIQP}–\eqref{metric_min_MIQP} for the MIQP formulation of the GEP,
in combination with the objective function bounds of Algorithm~\ref{alg:TSA_algorithm},
yields the range of values for the metric $M(\boldsymbol{z})$ that preserve the desired global optimality gap.

Moreover, solving \eqref{metric_max_MILP}--\eqref{metric_min_MILP} or \eqref{metric_max_MIQP}--\eqref{metric_min_MIQP} in conjunction with Algorithm~\ref{alg:TSA_algorithm} offers various advantages over directly addressing the full-scale models.
First, the feasible solution derived via Algorithm~\ref{alg:TSA_algorithm} can be used to warm-start \eqref{metric_max_MILP}–\eqref{metric_min_MILP} and \eqref{metric_max_MIQP}–\eqref{metric_min_MIQP}.
More importantly, compared to the original full-scale models,
the metric-specific models incorporate additional constraints, namely \eqref{metric_max_MILP_bounds}, \eqref{metric_min_MILP_bounds}, \eqref{metric_max_MIQP_bounds}, and \eqref{metric_min_MIQP_bounds}, that restrict the feasible region to solutions consistent with the objective function bounds.
This tightening of the feasible region not only improves computational efficiency compared to solving the full-scale models directly,
but also provides targeted insight into the metric of interest, while retaining the performance guarantee of Algorithm~\ref{alg:TSA_algorithm}.

Notably, if \eqref{metric_min_MILP} and \eqref{metric_min_MIQP} remain computationally demanding,
an LP relaxation may be applied to obtain a valid lower bound on the metric.
Similarly, the metric-specific models \eqref{metric_max_MILP} and \eqref{metric_max_MIQP} can be simplified by fixing variables,
e.g., to the values derived from the feasible full-scale solution obtained via Algorithm~\ref{alg:TSA_algorithm},
to establish a valid upper bound on the metric.

\section{Numerical Results}
\label{sec:numerical_results}
This section reports the numerical results.
Subsection~\ref{subsec:obj_fun_error_bounds_results} presents the objective function bounds of Algorithm~\ref{alg:TSA_algorithm}.
Subsection~\ref{subsec:Benders_results} compares the proposed algorithm with classical Benders decomposition.
Subsection~\ref{subsec:what_are_we_clustering_for_results} illustrates how the objective function bounds inform stakeholder-specific metrics.

The input energy demand and capacity factor time series, as well as the simulation parameters used in the following, are sourced from \cite{santosuosso2025optimal}.
The non-supplied energy cost is 5000 \euro/MWh,
with investment costs of 40000 \euro/MW for thermal generators, 30000 \euro/MW for vRES, and 35000 \euro/MW for storage units. 
The operational costs are 50 \euro/MWh for thermal and 3 \euro/MWh for vRES.
The generator and storage investment capacities are constrained between 0.1 and 1 MW, with 20\% thermal generators and 80\% vRES units.
The capacity factor for thermal generators is fixed at $F_t = 1$ for all time periods.
The maximum storage charging and discharging power limits are generated as $\boldsymbol{\overline{P}}^{\boldsymbol{\mathrm{s}}} \sim \mathcal{U}(0.3, 0.6)$ MW,
where $\mathcal{U}(a,b)$ denotes a uniform distribution on the interval $[a,b]$,
while $\boldsymbol{\underline{P}}^{\boldsymbol{\mathrm{s}}} = \mathbf{0}_{2 N}$.
The charging and discharging efficiencies are set to 0.9 and 1.1, respectively, with initial storage states set to 0 MWh.
In the MIQP formulation, we take $\mathbf{A} = \begin{bmatrix} \mathbf{0}_{N \times G + 1}, \mathbf{I}_{N}, \mathbf{0}_{N \times 2 N} \end{bmatrix}$,
enforcing reference values for the storage states, sampled from $\mathcal{U}(0.1, 1)$.
We set $\Delta = 1$ hour, $I = 1000$, and $\epsilon^\mathrm{thr} = 0.01$ (1\% error).
The simulations are conducted on an Intel i7 processor with 32 GB of RAM, using Gurobi 12.0.1.

\subsection{The Objective Function Bounds}
\label{subsec:obj_fun_error_bounds_results}

\begin{figure}[t]
\centering
\includegraphics[width=1\textwidth]{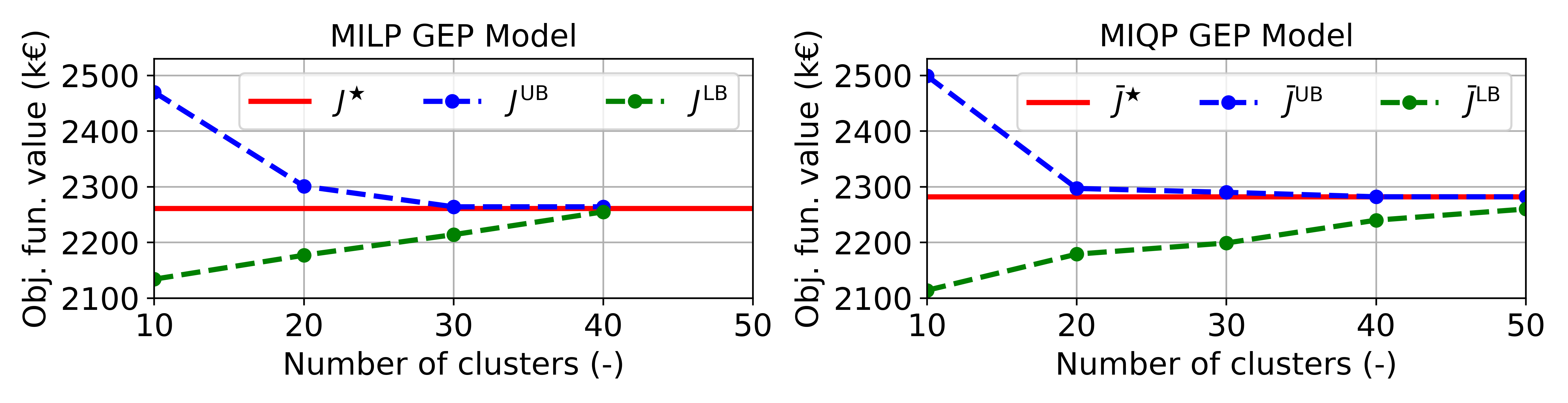}
\caption{Illustrative examples of objective function bounds derived using the proposed Algorithm~\ref{alg:TSA_algorithm} for the MILP and MIQP formulations of the GEP problem.}\label{fig1}
\end{figure}

A key advantage of the proposed Algorithm~\ref{alg:TSA_algorithm} over several existing TSA-based solution methods for GEP is its ability to derive objective function bounds, offering a clear performance guarantee.
Figure~\ref{fig1} illustrates the obtained bounds for a stylized example with $G=10$, $N=10$, and $T=500$ hours,
using sequential clustering (randomly splitting the time periods while maintaining temporal chronology), $K^0 = 10$, and $\rho = 10$.
In both MILP and MIQP cases, the model size is reduced by one order of magnitude (40 or 50 clusters instead of 500 periods).
Notably, the upper bound, derived as a projection of the aggregated solution into the original feasible space of the GEP problem, 
remains of high quality in early iterations, while the lower bound progressively refines.
For the MILP model, the sequential clustering typically results in a linear evolution of the lower bound.
This linearity, however, is not generally retained in the MIQP model, as shown in the figure.

\begin{figure}[t]
\centering
\includegraphics[width=1\textwidth]{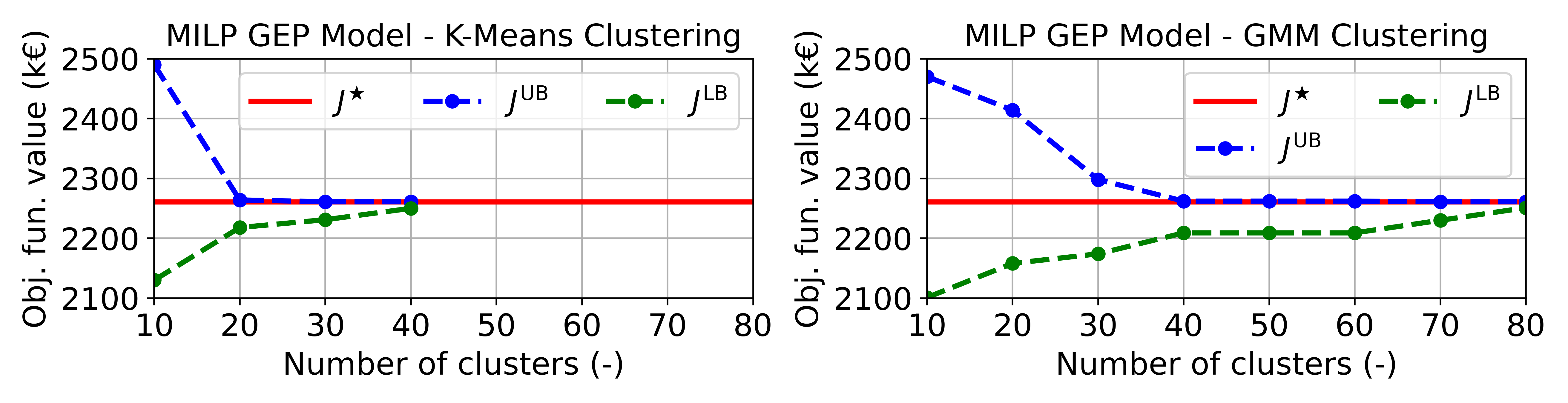}
\caption{Illustrative examples of objective function bounds derived using the proposed Algorithm~\ref{alg:TSA_algorithm} for the MILP GEP model, with different clustering techniques.}\label{fig2}
\end{figure}

Although the performance guarantee of Algorithm~\ref{alg:TSA_algorithm} is independent of the clustering technique employed,
the quality of the resulting bounds depends on this choice.
Figure~\ref{fig2} compares the bounds obtained using k-means and GMM in the aforementioned stylized example.
As neither technique preserves temporal chronology, a postprocessing step is applied to recover the closest temporally ordered set of clusters. 
K-means yields convergence within four iterations, while GMM requires twice as many due to its (unmet) assumption of Gaussian-distributed input data,
resulting in increasingly complex aggregated models with minimal improvement in the approximation quality.

\begin{table}[t]
\centering
\begin{tabular}{|c|c|c|c|c|}
\hline
\multirow{3}{*}{\textbf{Settings}} & \multicolumn{4}{|c|}{\textbf{Runtime (min)}}\\
\cline{2-5}
& \multicolumn{2}{|c|}{\textbf{MILP GEP model}} & \multicolumn{2}{|c|}{\textbf{MIQP GEP model}}\\
\cline{2-5}
& \textbf{Full model} & \textbf{Alg.~\ref{alg:TSA_algorithm}} & \textbf{Full model} & \textbf{Alg.~\ref{alg:TSA_algorithm}}\\
\hline
\multicolumn{1}{|l|}{$T = 8760$} & \multirow{2}{*}{$161.8$} & $174.4$ & \multirow{2}{*}{$496.1$} & $388.1$\\
\multicolumn{1}{|l|}{$N = G = 1500$} & & $(\boldsymbol{+ 7.8 \%})$ & & $(\boldsymbol{- 21.8} \%)$\\
\hline
\multicolumn{1}{|l|}{$T = 8760$} & \multirow{2}{*}{$946.6$} & $833.1$ & \multirow{2}{*}{$\infty$} & $1069.8$\\
\multicolumn{1}{|l|}{$N = G = 3000$} & & $(\boldsymbol{- 12.0 \%})$ & & $(\boldsymbol{- \infty} \%)$\\
\hline
\multicolumn{1}{|l|}{$T = 43800$} & \multirow{2}{*}{$1334.7$} & $1016.4$ & \multirow{2}{*}{$\infty$} & $1426.6$\\
\multicolumn{1}{|l|}{$N = G = 3000$} & & $(\boldsymbol{- 23.8 \%})$ & & $(\boldsymbol{- \infty} \%)$\\
\hline
\end{tabular}
\caption{Runtimes of Algorithm~\ref{alg:TSA_algorithm} versus full-scale optimization, with relative differences in brackets. The symbol $\infty$ denotes intractable instances of the GEP problem.}\label{tab1}
\end{table}

Table~\ref{tab1} presents the runtimes of Algorithm~\ref{alg:TSA_algorithm} and full-scale optimization, with $K^0 = T / 10$, $\rho = 100$, and sequential clustering.
The models that could not be solved within 24 hours are denoted by $\infty$.
In the MILP case, Algorithm~\ref{alg:TSA_algorithm} shows no computational advantage for small-scale problems.
However, as the problem size increases,
particularly when extending $T$ from 8760 hours (1 year) to 43800 hours (5 years),
a significant advantage is observed relative to full-scale optimization.
In the MIQP case, Algorithm~\ref{alg:TSA_algorithm} reduces the runtime by 20\% for small-scale problems
and, more importantly, restores tractability, whereas full-scale optimization fails.

\subsection{Results from the Comparison with Benders Decomposition}
\label{subsec:Benders_results}

As discussed in Subsection~\ref{subsec:comparison_benders_decomposition}, the proposed TSA-based Algorithm~\ref{alg:TSA_algorithm} offers key potential advantages over classical Benders decomposition by adapting to the specific structure of the GEP model.
Figure~\ref{fig3} illustrates this by comparing the optimality gaps achieved by both methods for a MILP model with $G = 25$, $N = 25$, and $T = 500$ hours.
While classical Benders reduces the gap to 5\% within a few iterations, 50 additional iterations are required to reach the target 1\%,
due to the lack of direct control over cut selection.
In contrast, the proposed algorithm allows direct clustering technique selection and parameters tuning, improving flexibility and reducing the iterations required for convergence by 44 compared to classical Benders decomposition.

\begin{figure}[t]
\centering
\includegraphics[width=1\textwidth]{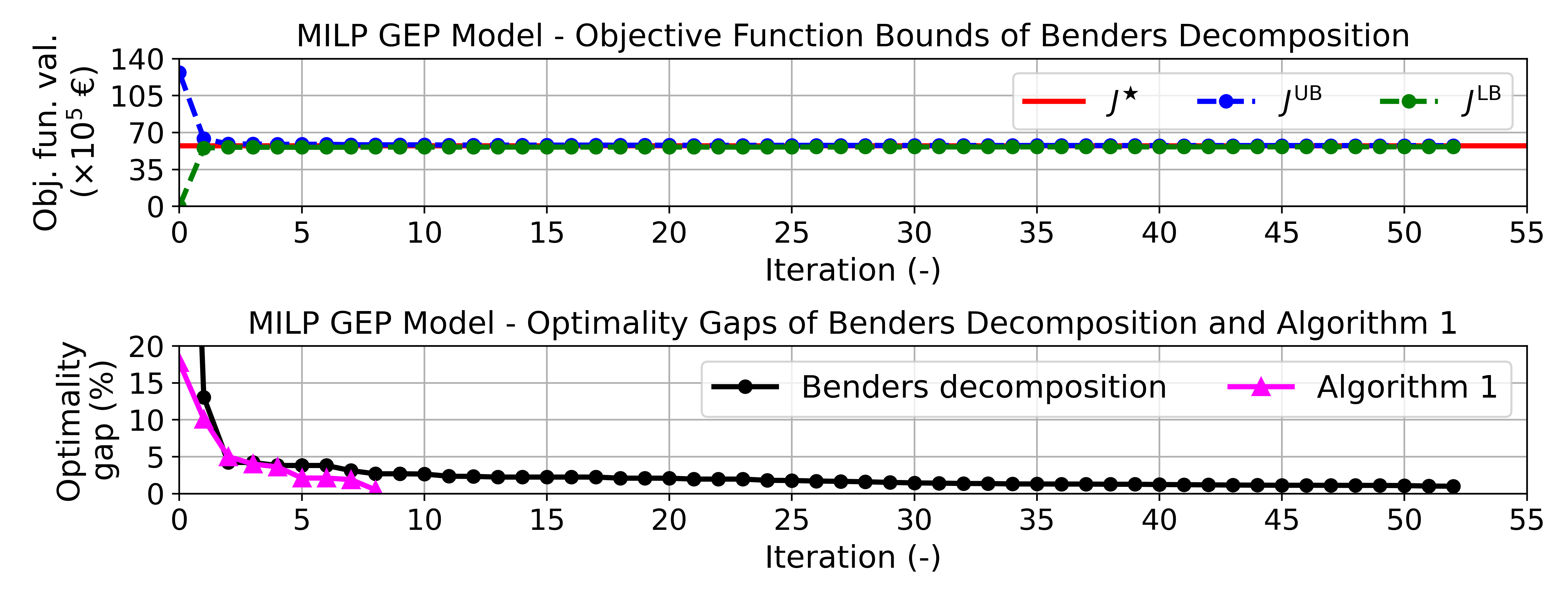}
\caption{Illustrative example of objective function bounds and optimality gaps derived from Benders decomposition and the proposed TSA-based Algorithm~\ref{alg:TSA_algorithm}.}\label{fig3}
\end{figure}

\begin{figure}[t]
\centering
\includegraphics[width=1\textwidth]{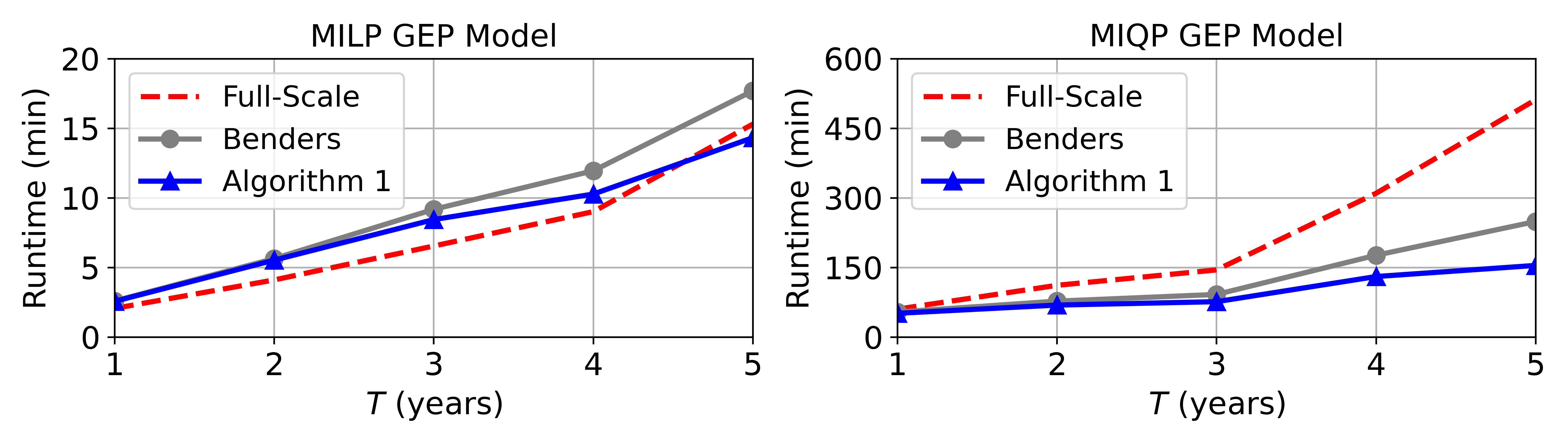}
\caption{Runtimes of Algorithm~\ref{alg:TSA_algorithm}, full-scale optimization, and Benders decomposition when applied to MILP and MIQP GEP models with increasing time horizon $T$.}\label{fig4}
\end{figure}

The enhanced flexibility of Algorithm~\ref{alg:TSA_algorithm} yields not only fewer iterations but also significant runtime reductions,
as shown in Figure~\ref{fig4} for both MILP and MIQP models with $G = 50$ and $N = 50$.
In the best case ($T = 5$ years), using $K^0 = T / 100$, $\rho = 100$, and sequential clustering,
the algorithm reduces runtime by 7\% and 19\% compared to full-scale optimization and Benders, respectively, for the MILP model, and by 70\% and 38\% for the MIQP model.

\subsection{Results for the Stakeholder-Specific Metrics}
\label{subsec:what_are_we_clustering_for_results}

\begin{figure}[t]
\centering
\includegraphics[width=1\textwidth]{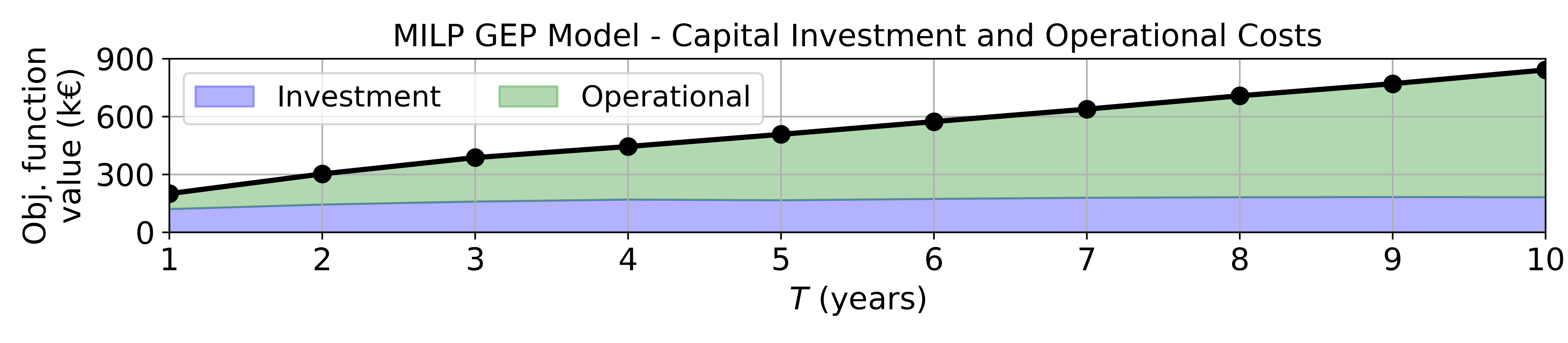}
\caption{Capital investment and operational cost terms within the optimal objective function value of a MILP GEP model as the time horizon $T$ increases.}\label{fig5}
\end{figure}

\begin{figure}[t]
\centering
\includegraphics[width=1\textwidth]{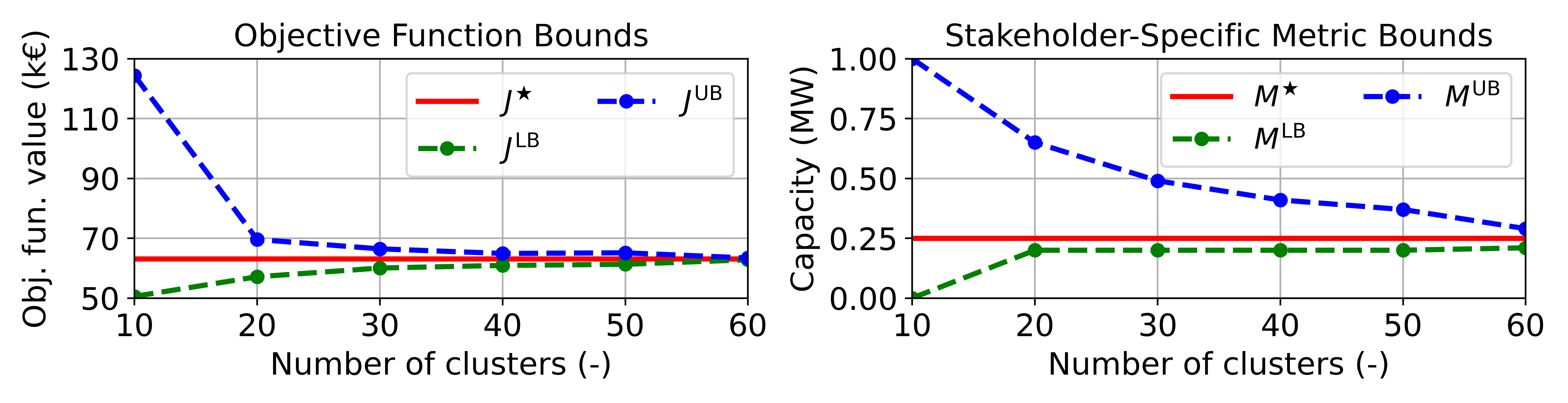}
\caption{Objective function bounds and corresponding bounds for the stakeholder-specific metric of interest, namely the capacity investment in a specific storage unit.}\label{fig6}
\end{figure}

As discussed in Subsection~\ref{subsec:what_are_we_clustering_for}, existing performance-guaranteed TSA methods provide bounds only on the GEP objective, which may not reflect stakeholder-specific metrics.
These bounds should therefore be interpreted and adapted in relation to the specific metric of interest.
As demonstrated in Proposition~\ref{prop:long_term_operational_bounds} and shown in Figure~\ref{fig5}, capital investment costs dominate the objective function in short-term GEP, while operational costs become more prominent as $T$ increases.
Accordingly, the bounds from Algorithm~\ref{alg:TSA_algorithm} serve as
proxies for investment or expected operational costs, depending on $T$.

Consider a stakeholder using the MILP GEP model to analyze the development of a power system with a focus on a metric $M$,
such as the optimal capacity investment in a specific storage unit.
The objective function bounds from Algorithm~\ref{alg:TSA_algorithm}, combined with the metric-specific models in Subsection~\ref{subsec:what_are_we_clustering_for}, provide upper ($M^\mathrm{UB}$) and lower ($M^\mathrm{LB}$) bounds on the optimal metric value $M^\star$.
Figure~\ref{fig6} illustrates these bounds, offering iteratively refined capacity ranges for the storage unit of interest.
Notably, the metric-specific lower bound $M^\mathrm{LB}$ holds a clear practical significance: it represents the minimum (no-regret) capacity investment needed to preserve the global optimality gap achieved by Algorithm~\ref{alg:TSA_algorithm}.
For instance, Figure~\ref{fig6} shows that, at early iterations, the stakeholder is informed that a minimum investment of $\sim0.25$ MW in the storage unit is required to ensure optimal GEP decisions.

\section{Conclusion and Future Work}
\label{sec:conclusion}
This study addresses the GEP problem under both MILP and MIQP formulations.
As the problem size increases, nonconvex dynamics, heterogeneous asset interactions, and time-coupling constraints often render the GEP model computationally intensive or intractable.
To mitigate this, we propose a TSA-based solution method with rigorous performance guarantees, providing theoretically validated bounds on the maximum objective function error relative to the full-scale model.
Our iterative algorithm consistently produces valid bounds regardless of the clustering technique used and yields a feasible solution to the full-scale model at each iteration.
We provide theoretical and numerical comparisons with the Benders decomposition method.
Moreover, unlike existing approaches focused solely on objective function value approximations, our method extends the bounds to stakeholder-specific metrics.

Numerical results confirm the validity and tightness of the derived bounds, showing significant computational gains over Benders decomposition and full-scale optimization.
Notably, our method restores tractability for large-scale MIQP models.
An illustrative example further highlights how the objective function bounds can be translated into stakeholder-specific metric bounds.

Future work will investigate heuristics for clustering selection informed by the model structure and adaptive parameter tuning. The methodology will also be validated on alternative capacity expansion problems beyond GEP.

\section*{Acknowledgment}
Funded by the European Union (ERC, NetZero-Opt, 101116212). Views and opinions expressed are however those of the authors only and do not necessarily reflect those of the European Union or the European Research Council. Neither the European Union nor the granting authority can be held responsible for them.

\end{document}